\setlist{nolistsep}
\pgfplotsset{compat=1.17}
\definecolor{myblue}{rgb}{0.126, 0.875, 0.588}
\definecolor{mypink}{rgb}{0.875, 0.126, 0.412}
\newcommand{\N}{\mathbb{N}}
\newcommand{\Z}{\mathbb{Z}}
\newcommand{\R}{\mathbb{R}}
\newcommand{\CC}{\mathbb{C}}
\newcommand{\eps}{\varepsilon}
\newcommand{\ucalC}{\underline{\mathcal{C}}}
\newcommand{\Top}[0]{\mathsf{Top}}
\newcommand{\id}[1]{\ensuremath{\text{\normalfont id}_{#1}}}
\newcommand{\Mod}[1]{\ (\mathrm{mod}\ #1)}
\newcommand{\Ninfty}{N_\infty}
\newcommand{\factor}[2]{\left. \raise 2pt\hbox{\ensuremath{#1}} \right/
 	  \hskip -2pt\raise -2pt\hbox{\ensuremath{#2}}}
\newcommand{\Cgp}[1]{\factor{\Z}{#1 \Z}}
\newcommand{\Zgp}[2]{\factor{#1 \Z}{#2 \Z}}
\newtheorem{Thm}{Theorem}[section]
\newtheorem{Prop}[Thm]{Proposition}
\newtheorem{Lemma}[Thm]{Lemma}
\newtheorem{Conjecture}[Thm]{Conjecture}
\newtheorem{Notation}[Thm]{Notation}
\theoremstyle{definition}
\newtheorem{Def}[Thm]{Definition}
\theoremstyle{remark}
\newtheorem{Rmk}[Thm]{Remark}
\tikzset{
    weq/.style={anchor=south, rotate=90, inner sep=.5mm}
}
\tikzset{curve/.style={settings={#1},to path={(\tikztostart)
    .. controls ($(\tikztostart)!\pv{pos}!(\tikztotarget)!\pv{height}!270:(\tikztotarget)$)
    and ($(\tikztostart)!1-\pv{pos}!(\tikztotarget)!\pv{height}!270:(\tikztotarget)$)
    .. (\tikztotarget)\tikztonodes}},
    settings/.code={\tikzset{quiver/.cd,#1}
        \def\pv##1{\pgfkeysvalueof{/tikz/quiver/##1}}},
    quiver/.cd,pos/.initial=0.35,height/.initial=0}
\tikzset{tail reversed/.code={\pgfsetarrowsstart{tikzcd to}}}
\tikzset{2tail/.code={\pgfsetarrowsstart{Implies[reversed]}}}
\tikzset{2tail reversed/.code={\pgfsetarrowsstart{Implies}}}
\tikzset{no body/.style={/tikz/dash pattern=on 0 off 1mm}}
\def\@tocline#1#2#3#4#5#6#7{\relax
  \ifnum #1>\c@tocdepth 
  \else
    \par \addpenalty\@secpenalty\addvspace{#2}%
    \begingroup \hyphenpenalty\@M
    \@ifempty{#4}{%
      \@tempdima\csname r@tocindent\number#1\endcsname\relax
    }{%
      \@tempdima#4\relax
    }%
    \parindent\z@ \leftskip#3\relax \advance\leftskip\@tempdima\relax
    \rightskip\@pnumwidth plus4em \parfillskip-\@pnumwidth
    #5\leavevmode\hskip-\@tempdima
      \ifcase #1
       \or\or \hskip 1em \or \hskip 2em \else \hskip 3em \fi%
      #6\nobreak\relax
    \dotfill\hbox to\@pnumwidth{\@tocpagenum{#7}}\par
    \nobreak
    \endgroup
  \fi}
\tikzset{
    mypinkline/.style={draw=mypink, line width=2pt},
    grayline/.style={draw=gray, line width=2pt},
    dashedline/.style={dashed, draw=black, line width=1pt}
}
\newcommand{\drawbox}[6]{
    \begin{scope}[xshift=#1 cm]
        \ifx#3C \draw[mypinkline] (0,1) -- (1,1); \fi
        \ifx#3G \draw[grayline] (0,1) -- (1,1); \fi
        \ifx#3D \draw[dashedline] (0,1) -- (1,1); \fi
        \ifx#4C \draw[mypinkline] (1,1) -- (1,0); \fi
        \ifx#4G \draw[grayline] (1,1) -- (1,0); \fi
        \ifx#4D \draw[dashedline] (1,1) -- (1,0); \fi
        \ifx#5C \draw[mypinkline] (1,0) -- (0,0); \fi
        \ifx#5G \draw[grayline] (1,0) -- (0,0); \fi
        \ifx#5D \draw[dashedline] (1,0) -- (0,0); \fi
        \ifx#6C \draw[mypinkline] (0,0) -- (0,1); \fi
        \ifx#6G \draw[grayline] (0,0) -- (0,1); \fi
        \ifx#6D \draw[dashedline] (0,0) -- (0,1); \fi
        \node at (0.5,-0.4) {#2};
    \end{scope}
}
\begin{document}

\title[Rubin's saturation conjecture for cyclic groups of order $p^nq^m$]{Realization of saturated transfer systems on cyclic groups of order $p^nq^m$ by linear isometries $\Ninfty$-operads}
\author{Julie Bannwart}
\address{\parbox{\linewidth}{Research conducted at EPFL, Lausanne, Switzerland.\\
Current affiliation of the author: Institut für Mathematik, JGU Mainz, Germany.\\}}
\email{bannwart.julie@gmail.com}
\date{May 2025}

\begin{abstract}
We prove a specific case of Rubin's saturation conjecture about the realization of $G$-transfer systems, for $G$ a finite cyclic group, by linear isometries $\Ninfty$-operads, namely the case of cyclic groups of order $p^nq^m$ for $p,q$ distinct primes and $n,m\in \N$. 
\end{abstract}

\maketitle

\tableofcontents

\section{Introduction}

Blumberg and Hill study in \cite{BlumbergHill} the question of generalizing the theory of $E_\infty$-operads to the $G$-equivariant setting, for a finite group $G$. Instead of considering $E_\infty$-operads in categories of $G$-objects, their approach is
to take into account the $G$-action at a finer level, by considering operads in $G$-spaces, with contractibility conditions on the subspaces of fixed points in the space of operations in arity $n$, for all subgroups of $G\times S_n$, for $S_n$ the symmetric group (in particular, they must be either contractible or empty). The concept obtained is that of $\Ninfty$-operad, and represents (a whole range of) intermediary notion(s) between $E_\infty$-operads in a category and $E_\infty$-operads in its category of $G$-objects. 

A morphism between such operads is a weak equivalence if it induces weak equivalences of topological spaces in all arities, on all subspaces of fixed points for subgroups of $G\times S_n$.

An $\Ninfty$-operad imposes on its algebras the existence of additional structure: contractible spaces of transfer maps, and the information of which transfers exist is parametrized by the data of which spaces of fixed points in the operad are contractible, rather than empty.

These $\Ninfty$-operads are classified up to homotopy by posets refining the poset of subgroups of $G$ (``sub-posets'') satisfying additional conditions (namely being closed under conjugation and restriction), the so-called \emph{$G$-transfer systems}.
\newtheorem*{firstrep}{\emph{\textbf{Theorem \ref{Prop:EquivWithTrSyst}}}}
\begin{firstrep}[\protect{\cite[3.24]{BlumbergHill}}; and \cite{BonventrePereira}, \cite{GutierrezWhite}, \cite{RubinCombinatorial}]
\emph{The homotopy category of $\Ninfty$-operads is equivalent to the poset of $G$-transfer systems ordered by refinement.}
\end{firstrep}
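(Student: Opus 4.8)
The plan is to construct an explicit functor $\Phi$ from the category of $\Ninfty$-operads to the poset of $G$-transfer systems, to check that $\Phi$ carries weak equivalences to equalities --- so that it descends to a functor $\overline{\Phi}$ on the homotopy category --- and finally to verify that $\overline{\Phi}$ is essentially surjective and fully faithful. Since the target is a poset, these last two conditions unwind to the following: every $G$-transfer system is of the form $\Phi(\mathcal{O})$, and for $\Ninfty$-operads $\mathcal{O},\mathcal{O}'$ the set of morphisms $[\mathcal{O}]\to[\mathcal{O}']$ in the homotopy category is a single point when $\Phi(\mathcal{O})\le\Phi(\mathcal{O}')$ and empty otherwise. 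All of this takes place inside a model category of topological $G$-operads, so that localizing at the weak equivalences is legitimate.

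\textbf{The functor $\Phi$.} In arity $n$ the contractible-or-empty axiom only constrains the \emph{graph subgroups} of $G\times S_n$, namely those of the form $\Gamma^{\varphi}_H=\{(h,\varphi(h)):h\in H\}$ for $H\le G$ and a homomorphism $\varphi\colon H\to S_n$; such a subgroup carries the same information as a finite $H$-set $T_\varphi$ with $|T_\varphi|=n$. Call $T_\varphi$ \emph{admissible for $\mathcal{O}$} when $\mathcal{O}(n)^{\Gamma^{\varphi}_H}\neq\emptyset$, equivalently when that space is contractible. Using the unit, the operadic composition and the equivariance of $\mathcal{O}$, one checks that the class of admissible $H$-sets, as $H$ ranges over subgroups of $G$, is closed under passage to subobjects, under finite coproducts, and under ``self-induction'' $K\times_H(-)$ along admissible transfers; that is, it is an \emph{indexing system} in the sense of Blumberg--Hill. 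Indexing systems are in order-preserving bijection with $G$-transfer systems --- the $K$-set $K/H$ (for $H\le K\le G$) being admissible corresponding to the pair $H\le K$ lying in the transfer system --- and we let $\Phi(\mathcal{O})$ be the transfer system so obtained. A morphism $\mathcal{O}\to\mathcal{O}'$ induces maps $\mathcal{O}(n)^{\Gamma}\to\mathcal{O}'(n)^{\Gamma}$ for all graph subgroups $\Gamma$, hence preserves non-emptiness, so $\Phi(\mathcal{O})\subseteq\Phi(\mathcal{O}')$ and $\Phi$ is a functor to the poset. A weak equivalence is, in particular, a $\Gamma$-fixed-point weak equivalence for every graph subgroup $\Gamma$, hence preserves both non-emptiness and emptiness of those fixed-point spaces, so $\Phi(\mathcal{O})=\Phi(\mathcal{O}')$ and $\Phi$ descends to $\overline{\Phi}$.

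\textbf{Full faithfulness (rigidity).} Suppose $\Phi(\mathcal{O})\subseteq\Phi(\mathcal{O}')$; we must build a morphism $[\mathcal{O}]\to[\mathcal{O}']$ and show it is unique. Take a cofibrant replacement of $\mathcal{O}$ built by attaching free operad cells arity by arity, with a cell attached in arity $n$ indexed by an $H$-set $T$ that is admissible for $\mathcal{O}$. Construct a map to $\mathcal{O}'$ by transfinite induction over the cells: the obstruction to extending over the cell indexed by $(H,T)$ lies in the homotopy of $\mathcal{O}'(n)^{\Gamma^{T}_H}$, which is non-empty --- therefore contractible --- precisely because $T\in\Phi(\mathcal{O})\subseteq\Phi(\mathcal{O}')$; so the extension exists, and by contractibility of the very same spaces it is unique up to homotopy. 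This obstruction-theoretic argument is exactly the homotopical rigidity of $\Ninfty$-operads established in \cite{BlumbergHill}.

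\textbf{Essential surjectivity (realization), and the main obstacle.} It remains to realize an arbitrary $G$-transfer system $\mathcal{T}$, i.e.\ to produce an $\Ninfty$-operad $\mathcal{O}$ with $\Phi(\mathcal{O})=\mathcal{T}$. For each $H$-set $T$ (of size $n$) corresponding to a transfer in $\mathcal{T}$, choose a universal $(G\times S_n)$-space $E\mathcal{F}_T$ whose $\Lambda$-fixed points are contractible for graph subgroups $\Lambda$ subconjugate to $\Gamma^T_H$ and empty otherwise, assemble these into a free $G$-operad, and then correct it through a small-object/localization argument in the model category of $G$-operads so that the collection of admissible sets of the result is exactly $\mathcal{T}$ and no larger. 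Here the closure axioms of a transfer system --- stability under conjugation, the restriction (base-change/double-coset) condition, and transitivity/reflexivity --- are precisely what make the operadic composition maps of the candidate operad both defined and valued in the prescribed fixed-point strata, so that the construction is internally consistent. Executing this is the substance of \cite{BonventrePereira} (genuine equivariant operads and equivariant trees), \cite{GutierrezWhite} (model-categorical localization of $G$-operads) and \cite{RubinCombinatorial} (an explicit combinatorial model), and any one of them closes the proof. This realization step is the genuine difficulty: functoriality and rigidity are formal consequences of the contractible-or-empty axiom, whereas realization demands an honest, strictly associative topological operad whose full pattern of contractible versus empty fixed-point spaces in all arities at once reproduces $\mathcal{T}$ on the nose, with no spurious admissible sets creeping in.
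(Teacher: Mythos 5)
Your proposal is correct and follows essentially the same route as the paper: you define the comparison functor exactly as in Definitions \ref{Def:Admissible}--\ref{Def:FunctorC} (admissibility via non-empty fixed points of graph subgroups, packaged into an indexing/transfer system), and you delegate the two substantive inputs --- homotopical rigidity for full faithfulness and the realization of an arbitrary transfer system for essential surjectivity --- to \cite{BlumbergHill} and to \cite{BonventrePereira}, \cite{GutierrezWhite}, \cite{RubinCombinatorial} respectively, which is precisely how the paper treats this cited theorem.
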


In their foundational article, Blumberg and Hill defined a functor between the homotopy category of $\Ninfty$-operads and another poset, namely that of \emph{$G$-indexing systems}, and showed its fullness and faithfulness, while (essential) surjectivity was only proved some years later (Bonventre \& Pereira in \cite{BonventrePereira}, Gutiérrez \& White in \cite{GutierrezWhite}, Rubin in \cite{RubinCombinatorial}). The poset of $G$-indexing systems was later replaced in the statement by the (equivalent) poset of $G$-transfer systems by Rubin (e.g.\ \cite{RubinDetecting}) and Balchin–Barnes–Roitzheim (\cite{BBR}). In Section \ref{Sub:NInfinity}, we provide a brief introduction to $\Ninfty$-operads and their classification.\\

A particular class of $\Ninfty$-operads is provided by linear isometries operads. Given a $G$-universe $\mathcal{U}$, which is a countably infinite-dimensional real representation of $G$ by linear isometries, with all sub-representations occurring infinitely many times (and the trivial one-dimensional representation must appear), there is an $\Ninfty$-operad $\mathcal{L}(\mathcal{U})$ associated with it, whose operations in arity $n$ consist in the space of isometries $\mathcal{U}^{\oplus{n}} \to \mathcal{U}$. A realization problem then arises: which $G$-transfer systems can be realized by linear isometries operads, as the $G$-universe varies? For some justification as to why one might be interested in answering this question, see for example \cite[\S 1]{Macbrough}. Blumberg and Hill found a necessary condition: the transfer system must be saturated (if it contains the relation $H\subseteq K$ for some subgroups $H, K \leq G$, then for any intermediate subgroup $H \subseteq M \subseteq K$, the transfer system must also contain the relations $H \subseteq M$ and $M \subseteq K$). This condition may however not be sufficient, as proved in \cite{RubinDetecting}, and as we recall in Remark \ref{Rmk:SaturationNotSufficient}.

However, in the case of finite cyclic groups, Rubin conjectured in \cite{RubinDetecting} that the saturation condition becomes sufficient.
\newtheorem*{secrep}{\emph{\textbf{Conjecture \ref{Conj:Saturation}}}}
\begin{secrep}[Rubin's saturation conjecture]
\emph{Let $k\in \N^\ast = \{1,2,\dots\}$ and $e_1,\dots,e_k\in\N^\ast$. There exist integers $p_1,\dots,p_k$ depending on this choice, such that for all $k$-uples of distinct primes $q_1,\dots,q_k$ with $q_i\geq p_i$ for all $i\leq k$, and $G:= C_{q_1^{e_1}\cdots q_k^{e_k}}$, any saturated $G$-transfer system is realized by some linear isometries operad.}
\end{secrep}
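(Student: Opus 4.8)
\textbf{Proof strategy for Conjecture \ref{Conj:Saturation}.}

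The plan is to reduce the realization question to a purely combinatorial statement about which saturated transfer systems on $G = C_{q_1^{e_1}\cdots q_k^{e_k}}$ arise from $G$-universes, and then to win that combinatorial game by choosing the primes $q_i$ large and far apart. Recall that a $G$-universe $\mathcal{U}$ is determined up to the relevant equivalence by the set of (isomorphism classes of) irreducible $G$-representations that occur in it, and that the transfer system $\mathcal{L}(\mathcal{U})$ it realizes records the relation $H \subseteq K$ precisely when every irreducible summand of $\mathcal{U}$ restricts from $K$-fixed points to $H$-fixed points without losing dimension — concretely, when the $K$-isotypical content of $\mathcal{U}$, restricted to $H$, still contains enough copies of the relevant pieces. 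Because $G$ is cyclic, its irreducible real representations are indexed by divisors $d \mid |G|$ (the faithful representations of the quotients $C_d$), so a $G$-universe is the same data as a \emph{downward-closed} choice, for each divisor $d$, of whether the $d$-th irreducible is present; the subtlety is that presence of an irreducible forces relations in the realized transfer system in a way governed by the lattice of divisors, which for $G = C_{q_1^{e_1}\cdots q_k^{e_k}}$ is the product poset $\prod_{i=1}^k \{0,1,\dots,e_i\}$.

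First I would set up the dictionary precisely: identify subgroups of $G$ with points $\mathbf{a} = (a_1,\dots,a_k)$ of the product lattice $L := \prod_i \{0,\dots,e_i\}$ (the subgroup of index $\prod q_i^{a_i}$), identify a $G$-universe with a subset $S \subseteq L$ of ``present'' irreducibles closed under the divisibility order, and compute, as a function of $S$, exactly which covering relations $\mathbf{a} \lessdot \mathbf{b}$ (and hence, by saturation/transitivity, which relations $\mathbf{a} \leq \mathbf{b}$) lie in $\mathcal{L}(\mathcal{U}_S)$. The key numerical input is that the multiplicity with which a given irreducible of $C_d$ contributes to the $H$-fixed points versus the $K$-fixed points is controlled by Euler-totient-type quantities $\varphi(\,\cdot\,)$ evaluated at ratios of the $q_i^{e_i}$, and these are the only place the actual size of the primes enters. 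Second, I would prove the combinatorial realization statement: every saturated transfer system $R$ on $L$ is of the form $\mathcal{L}(\mathcal{U}_S)$ for a suitable downward-closed $S = S(R)$. The natural candidate is to let $S$ consist of those divisors $d$ such that the corresponding ``diagonal'' relation is \emph{not} forced to be in $R$ — i.e., one builds the universe by \emph{omitting} exactly the irreducibles whose presence would create transfers $R$ does not want — and then one must check (a) that this $S$ is downward closed, using that $R$ is closed under restriction, and (b) that the transfer system it realizes is exactly $R$, using saturation of $R$ for the forward inclusion and the explicit fixed-point computation for the reverse.

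The main obstacle — and the reason the conjecture needs the hypothesis ``$q_i \geq p_i$'' rather than holding for all cyclic groups — is step (b), controlling the \emph{extra} relations that an omission-based universe can accidentally realize. When two primes $q_i, q_j$ are comparable in size, a representation of $C_{q_i^{a}}$ can, after restriction, ``look like'' one involving $q_j$ in its fixed-point dimensions, so removing an irreducible to kill one unwanted transfer can fail to kill it, or can preserve an unwanted one, because of totient coincidences like $\varphi(q_i^a) \mid \varphi(q_j^b)$ or near-equalities among the $\varphi(q_i^{e_i})$. The resolution is a pigeonhole/growth argument: choose $p_1 < p_2 < \dots$ inductively so that for each $i$, the prime $q_i$ is large enough that $\varphi(q_i)= q_i - 1$ dominates all totient quantities built from $q_1,\dots,q_{i-1}$ and from the bounded exponents $e_1,\dots,e_{i-1}$, so that in every fixed-point dimension count the contributions coming from distinct primes are ``separated by scale'' and cannot interfere. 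Once this separation is in force, each covering relation of $L$ is governed by a single prime $q_i$ and a single totient inequality, the fixed-point computation decouples over the $k$ prime factors, and the forward and reverse inclusions in (b) follow coordinate by coordinate. I would therefore organize the write-up as: (1) the representation-theoretic dictionary; (2) the exact formula for $\mathcal{L}(\mathcal{U}_S)$ in terms of $S$; (3) the inductive choice of the thresholds $p_i$ and the resulting scale-separation lemma; (4) the definition of $S(R)$ and the proof that it is downward closed; (5) the verification $\mathcal{L}(\mathcal{U}_{S(R)}) = R$, with saturation used for one inclusion and scale separation plus the formula from (2) for the other. I expect step (3)–(5) interplay — making the threshold argument strong enough to decouple the computation yet not so strong that it becomes vacuous — to be where essentially all the real work lies; steps (1)–(2) should be a careful but routine unwinding of the linear isometries construction for cyclic groups.
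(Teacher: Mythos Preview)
A framing point first: in the paper this statement is a \emph{conjecture}, not a theorem the paper proves. The paper establishes only the two-prime case $G = C_{p^nq^m}$ with $p,q \geq 5$ (Theorem~\ref{Prop:SatConjp^nq^m}); the general case is credited to MacBrough \cite{Macbrough}, with $p_i = 5$ for all $i$. So there is no in-paper proof of the full statement to compare against; I compare your outline to the correct setup and to what the paper does in its special case.

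Your proposal rests on an incorrect model of $C_n$-universes and of the admissibility criterion. You assert that the irreducible real representations of $C_n$ are indexed by divisors $d \mid n$ and that a universe is a downward-closed subset of the divisor lattice. In fact the relevant irreducibles are the $\lambda_n(j)$ for $j \in C_n$, and by Proposition~\ref{Prop:IndexingSets}(i) a universe is encoded by an \emph{indexing set} $I \subseteq C_n$ with $0 \in I$ and $-I = I$: a symmetric subset of $C_n$, with no downward-closure condition, giving $2^{\lfloor n/2\rfloor}$ universes rather than one per subset of divisors. Likewise, admissibility of $C_d \to C_e$ is not governed by fixed-point dimensions or $\varphi$-type inequalities: by Proposition~\ref{Prop:IndexingSets}(ii) it holds precisely when $I \bmod e$ is invariant under translation by $d$ inside $C_e$. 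Your candidate $S(R)$ is therefore not even the right kind of object, and the formula you hope for in step~(2) does not exist in the shape you describe.

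Because the dictionary in steps (1)--(2) is wrong, the ``scale separation'' mechanism of step~(3) attacks a phantom obstacle. The genuine obstruction at small primes (Remark~\ref{Rmk:SaturationNotSufficient}) is that $C_2$ and $C_3$ admit no nontrivial proper symmetric subset containing $0$, not any totient coincidence between different primes; the thresholds are absolute ($q_i \geq 5$ suffices) and the primes do not need to be far apart or ordered by size. The paper's argument for $C_{p^nq^m}$ (Lemmata~\ref{Prop:BaseCaseProof} and~\ref{Prop:InductionStepProof}) proceeds instead by building the indexing set inductively over the subgroup lattice, extending it one prime-power step at a time and doing explicit case analysis on the ``top square'' to control exactly which reductions $I \bmod e$ are translation-invariant by which amounts. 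Any viable approach must start from that translation-invariance criterion rather than from dimension counts.
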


The conjecture was proved in \cite{RubinDetecting} for cyclic groups of order $p^n$ and $pq$, and in \cite{SatLinIsomTrSyst} for cyclic groups of order $qp^n$, for $p,q \geq 5$ distinct primes and any $n\in\N$. A few days after the publication of this paper on ArXiv, MacBrough published a preprint (\cite{Macbrough}) giving a positive answer to the conjecture, with $p_i = 5$ for all $i\leq k$ in any situation. His results are more general, and provide in particular a criterion for abelian groups admitting a presentation with two generators to satisfy the saturation conjecture (\cite[Thm 3.14]{Macbrough}).\\

In contrast, our methods are specific to the case of cyclic groups of order $p^nq^m$. Our proof of this particular case is more explicit and enlightens the complexity of the combinatorics involved in these saturated transfer systems, already in the simple case we are interested in. MacBrough writes that in \cite[\S 3.2]{Macbrough} that his proof of the saturation conjecture ``completely bypass[es] all of the diffculties in the direct approach to the saturation conjecture for cyclic groups''. What we do in this article is exactly to try this direct approach in the case of cyclic groups of order $p^nq^m$. Moreover, the proof we give in this case could in principle be applied to any particular example to construct the explicit linear isometries operad associated with a fixed transfer system. Section \ref{Sub:example} contains an example in the case $p=5$, $q=7$ and $n=m=1$, together with other examples and a comparison of MacBrough's approach to ours.\\

For arbitrary cyclic groups, the problem posed by the conjecture was reduced to a purely arithmetic one by Rubin in \cite{RubinDetecting}: there is a characterization of the relations contained in the transfer system arising from $\mathcal{L}(\mathcal{U})$, in terms of the translations under which a certain subset of $C_{n}$ is invariant. This subset is called the \emph{indexing set} and uniquely characterizes the universe $\mathcal{U}$ we are working with. We discuss this conjecture, and the reduction of the problem to modular arithmetic in Section~\ref{Sub:SaturationConjecture}.\\

The proofs of the three cases of the conjecture mentioned above consist in building suitably invariant indexing sets. Following the same approach, and using the previously proven cases as a basis for induction, we prove in Section \ref{Sub:Proof} our main result, i.e.\ the following instances of the saturation conjecture:

\newtheorem*{thirdrep}{\emph{\textbf{Theorem \ref{Prop:SatConjp^nq^m}}}}
\begin{thirdrep}[Saturation conjecture for $p^nq^m$]
\emph{Let $p,q\geq 5$ be distinct primes, and $n,m\in \N$. If $G = C_{p^nq^m}$, then any saturated $G$-transfer system is realized by some linear isometries operad on a $G$-universe $\mathcal{U}$.}
\end{thirdrep}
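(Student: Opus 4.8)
The plan is to use the arithmetic reformulation of Section~\ref{Sub:SaturationConjecture} to reduce the theorem to an explicit combinatorial construction, and then to carry that construction out by induction on $m$, bootstrapping from the cases already in the literature. Recall that the subgroups of $G = C_{p^nq^m}$ are the cyclic groups $C_{p^aq^b}$ with $0\le a\le n$ and $0\le b\le m$, so that the subgroup lattice is the $(n+1)\times(m+1)$ grid of divisors of $p^nq^m$; a saturated transfer system $R$ is then precisely one that is ``interval-closed'' in this grid, meaning that a relation $C_{p^aq^b}\to C_{p^{a'}q^{b'}}$ drags along every relation in the rectangle it spans. By the reduction recalled earlier, realizing $R$ by a linear isometries operad amounts to producing an \emph{indexing set} $I\subseteq C_{p^nq^m}$ whose invariance under certain explicit translations encodes exactly $R$: each relation of $R$ must force the associated translation-invariance of $I$, and --- this is the delicate half --- \emph{no} relation outside $R$ may be forced. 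It is natural to organize the construction along the Chinese Remainder decomposition $C_{p^nq^m}\cong C_{p^n}\times C_{q^m}$, so that $p$-power and $q$-power translations act on separate factors.

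For the induction I take $m$ as the induction variable. The case $m=0$ is Rubin's theorem for $C_{p^n}$ \cite{RubinDetecting}, and the case $m=1$ --- all of $C_{qp^n}$ --- is the theorem of \cite{SatLinIsomTrSyst} (which subsumes Rubin's $C_{pq}$ case); these form the base. Assuming $m\ge 2$ and the result for $C_{p^nq^{m-1}}$ for every $n$, I would restrict a given saturated transfer system $R$ on $C_{p^nq^m}$ to the subgroup $C_{p^nq^{m-1}}$, that is, to the subgroups $C_{p^aq^b}$ with $b\le m-1$. This restriction is again a saturated transfer system, so by the inductive hypothesis it is realized by some indexing set $I'$. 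The relations of $R$ still unaccounted for are exactly those whose target is one of the subgroups $C_{p^aq^m}$ in the ``top row'', and by saturation these are tightly constrained by the rest of $R$. The task is then to enlarge $I'$ to an indexing set $I$ on $C_{p^nq^m}$ that realizes precisely these additional relations while preserving everything realized by $I'$.

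This enlargement is the technical heart of the argument and the step I expect to be hardest. One must simultaneously (i) retain every translation-invariance carried by $I'$, (ii) install the new invariances demanded by the relations of $R$ into the top row, and (iii) \emph{destroy} every invariance not demanded by $R$, so that $I$ is not accidentally fixed by an unwanted translation --- it is precisely this rigidity requirement that fails for the small primes $2$ and $3$ and forces the hypothesis $p,q\ge 5$. I would handle (iii) by a counting/pigeonhole argument: when $p,q\ge 5$ there is enough room in $C_{p^n}$ and $C_{q^m}$ to choose the newly added elements of $I$ so that any translation fixing $I$ must fix each of finitely many explicitly designed ``witness'' configurations, which pins the stabilizer down to exactly what $R$ prescribes. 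Because saturated transfer systems on the grid are already intricate --- as the examples of Section~\ref{Sub:example} illustrate --- the bulk of the effort lies in the bookkeeping: tracking which translations are forced, which are forbidden, and checking that the pure-$p$, pure-$q$, and mixed parts of $R$ remain mutually consistent. Once this is complete, one runs the arithmetic dictionary of Section~\ref{Sub:SaturationConjecture} backwards to recover a $G$-universe $\mathcal{U}$ with the transfer system of $\mathcal{L}(\mathcal{U})$ equal to $R$, closing the induction.
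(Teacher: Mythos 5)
Your overall strategy is the same as the paper's (reduce to indexing sets via Rubin's arithmetic dictionary, then build the set inductively, extending from $C_{p^nq^{m-1}}$ to $C_{p^nq^m}$), but the step you yourself flag as ``the technical heart'' --- enlarging $I'$ to $I$ --- is exactly where the entire difficulty lives, and your sketch does not close it. Two concrete problems. First, the induction as you state it is too weak to propagate: from the inductive hypothesis you only get \emph{some} indexing set $I'$ realizing the restriction, and an arbitrary such $I'$ need not be extendable. The paper must strengthen the statement being proved: the realizing set is required to satisfy a condition $(\star)$ (its reduction mod $p^{i+1}q^m$ contains a non-zero multiple of $p^iq^m$ for every $i$), and the extension lemma demands that the new set reduce \emph{exactly} to the given one modulo $p^nq^m$ and contain prescribed non-zero multiples; these extra elements are what make the Bezout/CRT constructions possible in the next step. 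For the same reason you cannot simply quote the $C_{qp^n}$ and $C_{p^n}$ results from the literature as your base cases: they produce some realizing indexing set, not one with the extra properties the induction needs, which is why the paper reproves the rank-one case (its Lemma \ref{Prop:BaseCaseProof}) in a sharpened form.

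Second, adding the whole top row $\{C_{p^aq^m}\}_{0\le a\le n}$ in one stroke, with an unspecified ``counting/pigeonhole'' argument to kill unwanted invariances, is not a proof. In the paper this extension is itself an induction on $n$: one fills in the top row one square at a time, at each stage constructing $K$ with two prescribed reductions ($K \bmod p^nq^m = J$ and $K\bmod p^{n-1}q^{m+1} = I$, where $I$ comes from the inner inductive hypothesis), and then performing a case analysis on which of the two new cover relations $(n,m)\to(n,m+1)$ and $(n-1,m+1)\to(n,m+1)$ lie in $\mathcal{T}$ (seven configurations of the top-right square). Each case needs a different explicit construction, and the ``rigidity'' part (iii) is verified not by pigeonhole but by exhibiting specific absent elements (e.g.\ $2q^m$, $p^nq^m$, or $2p^{n-1}q^{m+1}$), which is where $p,q\ge 5$ enters quantitatively. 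You correctly locate where the work is and why small primes fail, but without the strengthened inductive statement, the compatibility-of-reductions bookkeeping, and the case-by-case constructions, the proposal is an outline of the problem rather than a solution to it.
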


This result does not hold for $p\leq 3$ or $q\leq 3$, see Remark \ref{Rmk:SaturationNotSufficient} for a counterexample. Saturated transfer systems on $C_{p^nq^m}$ were enumerated in \cite{SatLinIsomTrSyst}, by describing them in terms of \emph{saturated covers} of $[m]\times [n]$ and \emph{compatible codes}.\\

Generalizing our approach to cyclic groups whose order has more than two prime divisors (say, $n$) appears to be quite hopeless in practice, although an inductive argument seems theoretically possible. Indeed, just as in the proofs of Lemmata \ref{Prop:BaseCaseProof} and \ref{Prop:InductionStepProof}, one would probably have to distinguish between the different possibilities for the ``topmost cube'' (with corners $(e_1,\dots,e_n) \in [m_1]\times\dots\times[m_n]$, where $e_i \in \{m_i-1,m_i\}$ for all $1\leq i\leq n$), which has dimension $n$. However, the number of possibilities and the number of constraints for the indexing set to agree with its various restrictions (to allow induction) increase rapidly, and quickly make the construction of explicit indexing sets with good translation properties unmanageable in practice.

\subsubsection*{\normalfont \textbf{Acknowledgements}} The work presented in this paper was carried out during a summer internship in the Laboratory for Topology and Neuroscience at EPFL. I would like to thank the EPFL ``Summer in the Lab'' and ``Student Support'' programs for making this possible and the ``Domaine de Villette'' foundation for their support to the programs. I also would like to express all my gratitude to K. Hess Bellwald and J. Scherer for their continued encouragement and kind guidance. Finally, I thank the anonymous referees for helpful feedback. 

This version of the article has been accepted for publication in the Journal of Homotopy and Related Structures, after peer review, but is not the Version of Record and does not reflect post-acceptance improvements. The Version of Record is available online at: \href{https://doi.org/10.1007/s40062-025-00377-6}{https://doi.org/10.1007/s40062-025-00377-6}.

\section{\texorpdfstring{$G$}{G}-equivariant analog to \texorpdfstring{$E_\infty$}{E-infinity}-operads : \texorpdfstring{$\Ninfty$}{N-infinity}-operads}\label{Sub:NInfinity} 

In this section, we give the formal definition of the equivariant operads advertised in the introduction and present their classification up to homotopy
by transfer systems. Let $G$ be a fixed finite (discrete) group.
\begin{Def} A \emph{$G$-operad} is a topological operad $\mathcal{O}$ (or an operad in $G$-spaces), such that $\mathcal{O}(n)$ is a $(G\times S_n)$-space for all $n\in \N$, with a $G$-fixed identity element and $G$-equivariant structure maps. An \emph{$\Ninfty$-operad} is a $G$-operad such that the following conditions hold:
\begin{itemize}[leftmargin=*]
    \itemsep0em
    \item For all $n\in\N$, the action of the symmetric group $S_n$ on $\mathcal{O}(n)$ is free.
    \item For all subgroups $\Gamma\leq G\times S_n$, the space $\mathcal{O}(n)^\Gamma$ of $\Gamma$-fixed points is either empty or contractible (as a topological space, not necessarily equivariantly).
    \item The space of fixed points $\mathcal{O}(n)^G$ is non-empty for all $n\in\N$.
\end{itemize}
\end{Def}

\subsection{Algebras over $\Ninfty$-operads}\label{Sub:algebras} We clarify what we mean by algebras over such operads. Let $\Top$ be the category of topological spaces, and ${}_G\Top$ the category of $G$-spaces.
\begin{Def}\label{Def:NInfinityAlgebras} Given $\mathcal{O}$ an $\Ninfty$-operad, an \emph{algebra in ${}_G\Top$ over $\mathcal{O}$} is an algebra over the underlying operad in $G$-spaces.
\end{Def}

In fact, $\Ninfty$-algebras can be defined in any symmetric monoidal category that is tensored over the category of $G$-spaces. In \cite{BlumbergHill} the case of orthogonal $G$-spectra is studied in detail. We shall work only with ($G$-)spaces in this article.\\

The axioms in the definition of $\Ninfty$-operads ensure the existence of additional structure on their algebras, namely \emph{transfers maps} (or \emph{norm maps} in the case of spectra). We will state this result later (see Theorem \ref{Prop:StcNInfinityAlgebras}), once we have the language to describe precisely which transfer maps exist.\\

Weak equivalences between $\Ninfty$-operads should also respect the finer structure of all spaces of fixed points. Otherwise, they would amount only to weak equivalences of the underlying topological $E_\infty$-operads. But all categories of algebras over such operads are equivalent, whereas for different $\Ninfty$-operads the algebras do not look quite the same, due to the transfer maps. The richer structure on $\Ninfty$-operads will become apparent in their classification up to homotopy in Theorem \ref{Prop:EquivWithTrSyst}.
\begin{Def}\label{Def:WeqNInfinityOperads} A morphism of $\Ninfty$-operads $f:\mathcal{O} \to \mathcal{O}'$ is called a \emph{weak equivalence} if the underlying map of $G$-operads is a weak equivalence, namely if the induced $G$-equivariant map $f^\Gamma:\mathcal{O}(n)^\Gamma \to \mathcal{O}'(n)^\Gamma$ is a (non-equivariant) weak homotopy equivalence on the underlying topological spaces for all $n\in\N$ and all subgroups $\Gamma\leq G\times S_n$. By localizing with respect to these maps (as a category with weak equivalences), we obtain the \emph{homotopy category} of $\Ninfty$-operads, denoted by $\text{Ho}(\Ninfty\text{-Op})$.
\end{Def}
Definition \ref{Def:WeqNInfinityOperads} is justified by the fact that weakly equivalent $\Ninfty$-operads have Quillen equivalent categories of algebras, provided that $\mathcal{O}(n)$ and $\mathcal{O}'(n)$ are nice enough spaces for all $n\in\N$ (see \cite[Thm A.3]{BlumbergHill}).

\subsection{Linear isometries operads}

To define this class of $\Ninfty$-operads we are particularly interested in, we need specific representations of $G$.
\begin{Def} A \emph{$G$-universe} $\mathcal{U}$ is a real vector space of countably infinite dimension, endowed with an inner product and an action of $G$ by linear isometries, such that any sub-representation occurs infinitely often, including the trivial one.
\end{Def}

Since $G$ is finite, any $G$-universe can be written as $\bigoplus_{\N} (\R_\text{triv} \oplus V_1 \oplus \dots \oplus V_k)$ for some finite dimensional (irreducible) real linear isometric representations $V_1, \dots, V_k$ of $G$, with $\R_\text{triv}$ the trivial representation. 

\begin{Def} Given $\mathcal{U}$ a $G$-universe, the \emph{linear isometries operad} $\mathcal{L}(\mathcal{U})$ is the topological operad given in arity $n\in\N$ by the space of (non necessarily $G$-equivariant) isometries $\mathcal{U}^{\oplus n} \to \mathcal{U}$. The (left-) action of $S_n$ and the composition maps are defined as in the usual endomorphism operad, with the unit being the identity on $\mathcal{U}$. Given an isometry $f: \mathcal{U}^{\oplus n} \to \mathcal{U}$, $g\in G$ and $(\vec{x}_1,\dots,\vec{x}_n)\in\mathcal{U}^{\oplus n}$, let $(g\cdot f)(\vec{x}_1,\dots,\vec{x}_n) = g\cdot f(g^{-1}\cdot\vec{x}_1,\dots,g^{-1}\cdot\vec{x}_n)$ (action by conjugation).
\end{Def}

In particular, a $G$-fixed point in $\mathcal{L}(\mathcal{U})(n)$ is just a $G$-equivariant linear isometry $\mathcal{U}^{\oplus n}\to \mathcal{U}$.

Other examples of $\Ninfty$-operads include the \emph{equivariant infinite little disks operads}, the \emph{Steiner operads} or the \emph{embeddings operads}, all depending on a $G$-universe (see \cite[3.11]{BlumbergHill}).

\subsection{Classification of $\Ninfty$-operads up to homotopy}

Blumberg and Hill provide in \cite{BlumbergHill} a beautiful classification of $\Ninfty$-operads up to homotopy: their homotopy category is equivalent to a specific poset. To define the latter, we need to define transfer systems. These objects are due, independently, to Rubin (e.g.\ \cite{RubinDetecting}) and Balchin–Barnes–Roitzheim (\cite{BBR}).

\begin{Def}\label{Def:GTrSyst} A \emph{$G$-transfer system} is a relation $\to$ refining the inclusion $\leq$ in the lattice of subgroups of $G$, with, for all $K \leq H\leq G$:
\begin{itemize}[leftmargin=*]
    \itemsep0ex
    \item Reflexivity: $H \to H$.
    \item Transitivity/self-induction: if $K\to M$ and $M \to H$ for some $M\leq G$, then $K \to H$.
    \item Closure under conjugation: if $K\to H$, then $(gKg^{-1})\to(gHg^{-1})$ for all $g\in G$.
    \item Closure under restriction: $K\to H \implies \forall M\leq H, K\cap M \to M$.
\end{itemize}
\end{Def}

In other terms a $G$-transfer system is a wide subcategory of the poset of subgroups of $G$, closed under conjugation and base change in pullback squares. Transfer systems on $G$ form a poset $\text{Tr}(G)$ with respect to inclusion (refinement).

\begin{Rmk} 
When $G$ is abelian, the conjugation condition holds trivially, so the definition does not use the group structure anymore and becomes purely combinatorial. It can therefore be generalized to any lattice, with intersection replaced by the ``meet'' operation. On finite lattices, transfer systems are in one-to-one correspondence with weak factorization systems (providing the right class of maps) and therefore with contractible model structures (i.e., with all maps being weak equivalences) (see \cite{Mstc[n]}).
\end{Rmk}

We are now ready to state the classification theorem.

\begin{Thm}[\protect{\cite[3.24]{BlumbergHill}} and e.g.\ \cite{RubinCombinatorial}]\label{Prop:EquivWithTrSyst}
There is an equivalence of categories
$$\ucalC : \text{Ho}(\Ninfty\text{-Op})\longrightarrow \text{Tr}(G)$$
between the homotopy category of $\Ninfty$-operads and the poset of $G$-transfer systems.
\end{Thm}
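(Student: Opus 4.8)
The plan is to build the comparison functor $\ucalC$ directly on representatives, verify that it takes values in $\text{Tr}(G)$ and is invariant under weak equivalence, and then prove it is an equivalence by separating the fully faithful part (due to Blumberg and Hill) from the essentially surjective part (the realization of every transfer system, which is the genuinely hard input). For an $\Ninfty$-operad $\mathcal{O}$ and subgroups $K\leq H\leq G$ with $[H:K]=n$, the left translation action of $H$ on the coset set $H/K$ gives, after choosing a bijection $H/K\cong\{1,\dots,n\}$, a homomorphism $H\to S_n$; let $\Gamma_{H/K}\leq H\times S_n\leq G\times S_n$ be its graph, and declare $K\to H$ in $\ucalC(\mathcal{O})$ exactly when $\mathcal{O}(n)^{\Gamma_{H/K}}\neq\emptyset$. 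Changing the bijection conjugates $\Gamma_{H/K}$ inside $G\times S_n$, and conjugate subgroups have homeomorphic fixed-point spaces, so this is well defined. That $\ucalC(\mathcal{O})$ satisfies the transfer-system axioms comes from the operad structure: reflexivity since $\mathcal{O}(1)^H\supseteq\mathcal{O}(1)^G\neq\emptyset$; closure under conjugation since $gHg^{-1}/gKg^{-1}\cong H/K$ and the associated graphs are $G\times S_n$-conjugate; closure under restriction and transitivity (self-induction) from the composition maps of $\mathcal{O}$ together with the compatibility of fixed points with restriction of the ambient group along $M\hookrightarrow H$. These are precisely the verifications that the admissible $H$-sets of $\mathcal{O}$ form a $G$-indexing system in \cite[\S 3]{BlumbergHill}; the only delicate point is bookkeeping the embeddings of symmetric groups, and the passage to the equivalent language of transfer systems is \cite{RubinCombinatorial} (see also \cite{BBR}). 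A morphism $\mathcal{O}\to\mathcal{O}'$ induces maps $\mathcal{O}(n)^\Gamma\to\mathcal{O}'(n)^\Gamma$, hence $\ucalC(\mathcal{O})\subseteq\ucalC(\mathcal{O}')$, so $\ucalC$ is a functor to the poset $\text{Tr}(G)$; if the morphism is a weak equivalence, each $\mathcal{O}(n)^\Gamma\to\mathcal{O}'(n)^\Gamma$ is a weak homotopy equivalence, so one side is empty iff the other is, giving $\ucalC(\mathcal{O})=\ucalC(\mathcal{O}')$ and a factorization through $\text{Ho}(\Ninfty\text{-Op})$.

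\emph{Full faithfulness.} Since the target is a poset, this means: there is a morphism $\mathcal{O}\to\mathcal{O}'$ in $\text{Ho}(\Ninfty\text{-Op})$ if and only if $\ucalC(\mathcal{O})\subseteq\ucalC(\mathcal{O}')$, and then exactly one. The product operad $\mathcal{O}\times\mathcal{O}'$ is again an $\Ninfty$-operad: $S_n$ acts freely, $(\mathcal{O}\times\mathcal{O}')(n)^\Gamma=\mathcal{O}(n)^\Gamma\times\mathcal{O}'(n)^\Gamma$ is a product of two spaces each empty or contractible and so is itself empty or contractible, and the $G$-fixed points are nonempty. The projection $\mathcal{O}\times\mathcal{O}'\to\mathcal{O}$ is always a map of $\Ninfty$-operads, and if $\ucalC(\mathcal{O})\subseteq\ucalC(\mathcal{O}')$ it is a weak equivalence, because then $\Gamma$ admissible for $\mathcal{O}$ forces $\Gamma$ admissible for $\mathcal{O}'$, so the product fixed-point space is nonempty exactly when $\mathcal{O}(n)^\Gamma$ is; inverting it yields the wanted morphism $\mathcal{O}\xleftarrow{\sim}\mathcal{O}\times\mathcal{O}'\to\mathcal{O}'$. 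Faithfulness follows because the space of operad maps from a cofibrant replacement of an $\Ninfty$-operad into $\mathcal{O}'$ is assembled from the fixed-point spaces $\mathcal{O}'(n)^\Gamma$, each empty or contractible, so its path components are contractible and any two morphisms of $\Ninfty$-operads with the same source and target are homotopic. This is the content of \cite[3.24]{BlumbergHill}.

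\emph{Essential surjectivity, the hard part.} Given a transfer system $T$, one must exhibit an $\Ninfty$-operad $\mathcal{O}_T$ with $\ucalC(\mathcal{O}_T)=T$; equivalently, passing to the associated indexing system $I_T$, an operad in $G$-spaces with free $S_n$-actions whose arity-$n$ component has, for every $\Gamma\leq G\times S_n$, fixed points contractible when the $\Gamma$-set $\{1,\dots,n\}$ lies in $I_T$ and empty otherwise. Blumberg and Hill left this open; it was settled independently by Bonventre--Pereira via genuine equivariant operads, by Gutiérrez--White by realizing $\mathcal{O}_T$ as a fibrant object of a left Bousfield localization of a model category of $G$-operads set up so that precisely the $I_T$-admissible transfers become available, and by Rubin combinatorially, via an explicit categorical operad built from the $I_T$-admissible finite $G$-sets whose arity-$n$ fixed-point categories have a terminal object in the admissible range and are empty outside it, so that their classifying spaces have the prescribed homotopy type. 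The obstruction all three must overcome is realizing the required empty/contractible pattern on \emph{all} subgroups of $G\times S_n$ at once while keeping every $S_n$-action free: the obvious candidate, a union of classifying spaces of admissible-orbit categories, typically fails either freeness or contractibility, and repairing this is the substance of those papers. Combining the fully faithful functor above with any one of these realization results shows that $\ucalC$ is an equivalence of categories.
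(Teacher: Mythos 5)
This theorem is not proved in the paper: it is quoted from \cite[3.24]{BlumbergHill} together with the later realization results, and your outline follows exactly the same division of labour as the paper's discussion — the functor $\ucalC$ defined through graph subgroups and admissible orbits (the paper's Definitions \ref{Def:Admissible} and \ref{Def:FunctorC}), well-definedness and homotopy invariance from conjugation-invariance of fixed points, full faithfulness via the product-operad zig-zag and the empty-or-contractible mapping spaces of Blumberg--Hill, and essential surjectivity deferred to Bonventre--Pereira, Gutiérrez--White and Rubin, which is precisely how the paper treats it. The only soft spots are that your fullness step silently uses that the admissible sets form an indexing system (so that containment of transfer systems controls \emph{all} graph subgroups, not just orbit ones), and that ``contractible path components'' alone would not make two maps homotopic — one needs the derived mapping space to be empty or contractible, which is what \cite[3.24]{BlumbergHill} actually establishes — but since both steps are being cited rather than reproved, this matches the paper's citation-level treatment.
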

Actually, Blumberg and Hill originally used another poset, the poset of \emph{indexing systems}, to prove that $\ucalC$ was fully faithful, and conjectured it was an equivalence of categories, which was soon proved by several authors in different ways. Gutiérrez and White (\cite{GutierrezWhite}) for instance prove the existence of an $\Ninfty$-operad associated with each suitable sequence of subgroups of $G\times S_n$, using model structures on the category of $G$-operads. Bonventre and Pereira (\cite{BonventrePereira}) use ``genuine equivariant operads'' and bar constructions. The proof by Rubin (\cite{RubinCombinatorial}) is more combinatorial, and uses a ``discrete'' version of $\Ninfty$-operads, namely some particular operads in $G$-sets, whose homotopy theory is equivalent to that of $\Ninfty$-operads.

Using transfer systems instead of indexing systems constitutes an equivalent approach to the problem. The former can be seen as \emph{generating data} for the latter: indexing systems are expressed in terms of categories of $H$-sets when $H$ varies among the subgroups of $G$, and transfer systems correspond to the orbit objects in these categories, i.e., the $H$-sets isomorphic to $H/K$ for $K\leq H$ a subgroup. Transfer systems can be more convenient to work with as they are smaller, with an a priori simpler definition.\\

Let us now describe the functor $\ucalC$. We need preliminary definitions.
\begin{Def} Let $H\leq G$ be a subgroup and $T$ a finite $H$-set. The \emph{graph subgroup associated with $T$} is the (conjugacy class of the) subgroup ${\Gamma_T \leq G \times S_{|T|}}$ given by the graph of the homomorphism $H\to S_{|T|}$ sending $h\in H$ to the permutation $\sigma_h \in S_{|T|}$ such that $h\cdot t_i = t_{\sigma_h(i)}$ for all $i\leq |T|$, for $t_1,\dots,t_{|T|}$ an enumeration of $T$. If $T = H/K$ for some $K\leq H$, we write $\Gamma_T = {\Gamma_{H,K}}$.
\end{Def}

This is an abuse of notation because this subgroup may change depending on the enumeration of $T$ chosen. However, if we relabel $T$ using a permutation $\tau$, then the subgroups obtained are conjugated by $\tau$. Actually, all subgroups of $G\times S_n$ whose intersection with $S_n$ is trivial are graph subgroups, which can be seen as follows. Let $\Gamma \leq G\times S_n$ be such a subgroup, and $H\leq G$ be its projection on the first component. If $h\in H$, then there exists some $\sigma_h\in S_n$ with $(h,\sigma_h)\in \Gamma$. Moreover, $\sigma_h$ is unique with this property: if $(h,\tau) \in\Gamma$, then $(e,\sigma_h\tau^{-1})\in \Gamma\cap S_n = \{(1,\id{})\}$, so $\sigma_h=\tau$. The assignment $h\mapsto \sigma_h$ is a group homomorphism: given $h,h'\in H$, we have $(h,\sigma), (h',\sigma_{h'})\in \Gamma$, so $(hh',\sigma_h\sigma_{h'})\in\Gamma$, and by uniqueness $\sigma_{hh'} = \sigma_h\sigma_{h'}$. Therefore, any such subgroup determines a morphism $H\to S_n$, which yields exactly an $H$-set structure on $\{1,\dots,n\}$.

\begin{Def}\label{Def:Admissible} Let $\mathcal{O}$ be an $\Ninfty$-operad and $H\leq G$ a subgroup. A finite $H$-set $T$ is called \emph{admissible} if $\mathcal{O}(|T|)^{\Gamma_T} \neq \emptyset$. 
\end{Def} 

This is well-defined, because the condition about the fixed points being non-empty depends only on the conjugacy class of $\Gamma_T$.

\begin{Def}\label{Def:FunctorC} The functor $\ucalC$ in Theorem \ref{Prop:EquivWithTrSyst} is induced, using the universal property of localization, by the functor $\ucalC: \Ninfty\text{-Op} \to \text{Tr}(G)$ sending an $\Ninfty$-operad $\mathcal{O}$ to the $G$-transfer system $\to$ such that $K\to H$ if and only if $K\leq H\leq G$ are subgroups, and $H/K$ is admissible as an $H$-set for $\mathcal{O}$. The relations contained in the transfer system are called \emph{admissible}.
\end{Def}

Admissible relations are the ones for which the transfer maps advertised in Subsection \S \ref{Sub:algebras} can be constructed. 

\begin{Thm}[\protect{\cite[7.1, 7.2]{BlumbergHill}}, \protect{\cite[3.5]{RubinDetecting}}]\label{Prop:StcNInfinityAlgebras} Let $\mathcal{O}$ be an $\Ninfty$-operad, and $X$ an algebra in ${}_G\Top$ over $\mathcal{O}$.
\begin{itemize}[leftmargin=*]
    \itemsep0em
    \item Given an admissible relation $K\to H$ (see Definitions \ref{Def:Admissible} and \ref{Def:FunctorC}) for subgroups $K\leq H\leq G$, there is a contractible space of maps $$(G\times S_{[H:K]})/\Gamma_{H,K} \longrightarrow \mathcal{O}([H:K]).$$
    \item Again assuming that $K \to H$ is admissible, there are contractible spaces of \emph{internal transfer maps} of $H$-spaces ${X^K \to X^H}$ and \emph{external transfer maps} of $G$-spaces $G\times_H X^{\times H/K} \to X$.
    \item If $K\leq H$ and $N\leq H$ are admissible, any $H$-equivariant map $H/K\to H/N$ induces a contractible space of $H$-equivariant maps 
    $$\Top\left(H/K,X\right) \longrightarrow \Top\left(H/N,X\right),$$
    where the action of $H$ is by conjugation (and $X$ is viewed as an $H$-space).
\end{itemize}
\end{Thm}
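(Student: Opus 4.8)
The three assertions are three faces of one construction: the first is just a restatement of admissibility, the second is obtained from it by evaluating the structure map of the algebra, and the third is obtained by iterating operadic composition. So the plan is to prove them in this order. For the first bullet, by the orbit--fixed-point adjunction the space of $(G\times S_{[H:K]})$-equivariant maps $(G\times S_{[H:K]})/\Gamma_{H,K}\to\mathcal{O}([H:K])$ is naturally homeomorphic to the fixed-point space $\mathcal{O}([H:K])^{\Gamma_{H,K}}$. By Definition \ref{Def:Admissible}, admissibility of $K\to H$ says exactly that this space is non-empty, and the contractibility axiom in the definition of an $\Ninfty$-operad then forces it to be contractible.

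For the second bullet, fix $n=[H:K]$, an enumeration of $H/K$, and the resulting graph subgroup $\Gamma_{H,K}\le G\times S_n$. Choose $f\in\mathcal{O}(n)^{\Gamma_{H,K}}$ and let $\phi_f\colon X^{\times H/K}\to X$ be $\phi_f(\bar x)=\mu_n(f,\bar x)$, where $\mu_n\colon\mathcal{O}(n)\times X^{\times n}\to X$ is the structure map of the algebra. Equip $X^{\times H/K}$ with the $H$-action that permutes factors via $h\mapsto\sigma_h$ and acts diagonally on $X$; combining the $S_n$- and $G$-equivariance of $\mu_n$ with the identity $(h,\sigma_h)\cdot f=f$ shows that $\phi_f$ is $H$-equivariant. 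The external transfer is then the $G$-map $G\times_H X^{\times H/K}\to X$ adjoint to $\phi_f$ under the adjunction $G\times_H(-)\dashv\mathrm{res}^G_H$. The internal transfer $X^K\to X^H$ sends $x\in X^K$ to $\phi_f\bigl((h\cdot x)_{hK\in H/K}\bigr)$: this tuple is well defined because $x$ is $K$-fixed, it is fixed by the twisted $H$-action on $X^{\times H/K}$, and hence its image under the $H$-equivariant map $\phi_f$ lies in $X^H$. Since the only choice entering these constructions is $f$, ranging over the contractible space $\mathcal{O}(n)^{\Gamma_{H,K}}$, the space of resulting maps is contractible; one should check that $f\mapsto\phi_f$ is continuous and induces a weak equivalence onto the relevant component of mapping space.

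For the third bullet, note that every $H$-equivariant map $H/K\to H/N$ is, after replacing $N$ by a conjugate, isomorphic to the canonical projection for some $K\le N\le H$; assume this. Closure of the transfer system under restriction (Definition \ref{Def:GTrSyst}), applied to $K\to H$ and the subgroup $N\le H$, gives $K=K\cap N\to N$, so $K\to N$ is admissible, and the second bullet applied inside $N$ provides a contractible space of $N$-equivariant maps $X^{\times N/K}\to X$. Identifying $\Top(H/K,X)$ with $X^{\times H/K}$ under the conjugation action and grouping coordinates along the fibres of $H/K\to H/N$ — which are the translates $hN/K$ — gives $X^{\times H/K}\cong\prod_{hN\in H/N}X^{\times hN/K}$; applying the $N$-equivariant transfer in each factor (conjugating by a coset representative for the fibres over $hN\ne eN$) yields the desired map $\Top(H/K,X)\to\Top(H/N,X)$, $H$-equivariant for the conjugation actions, with contractibility inherited from the second bullet since products of contractible spaces are contractible. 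Alternatively one can produce all three maps uniformly from the operadic composition $\mathcal{O}(k)\times\mathcal{O}(j_1)\times\cdots\times\mathcal{O}(j_k)\to\mathcal{O}(j_1+\cdots+j_k)$, feeding in the fixed points furnished by the various admissible relations.

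I expect the main obstacle to be the third bullet, specifically the verification that the fibrewise transfer is independent — up to contractible choice — of the chosen coset representatives and enumerations, and is genuinely $H$-equivariant for the conjugation action on both sides: this is exactly where one needs the equivariance of the operadic composition maps together with the fact that $\mathcal{O}(n)^{\Gamma}$ is not merely non-empty but contractible. The equivariance bookkeeping in the second bullet — arranging the twisted $H$-action so that $\phi_f$ restricts correctly to fixed points and co-induces to a $G$-map — is the other delicate point, but it is routine once conventions are fixed.
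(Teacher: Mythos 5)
The paper does not actually prove this theorem --- it is imported verbatim from Blumberg--Hill (7.1, 7.2) and Rubin (3.5) --- so there is no internal proof to compare against; judged on its own, your proposal reconstructs the standard argument from those sources and is essentially correct. The first bullet is exactly the orbit/fixed-point adjunction $\mathrm{Map}^{G\times S_n}\bigl((G\times S_n)/\Gamma_{H,K},\mathcal{O}(n)\bigr)\cong\mathcal{O}(n)^{\Gamma_{H,K}}$ plus the $\Ninfty$ axiom, as you say. Your second bullet is the Blumberg--Hill construction of norms/transfers: choosing $f\in\mathcal{O}(n)^{\Gamma_{H,K}}$ and feeding it to the structure map, with the internal transfer obtained by evaluating on the tuple $(hx)_{hK}$ and the external one by the induction--restriction adjunction; your equivariance and fixed-point checks are the right ones. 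Your third bullet (reduce to the projection $H/K\to H/N$ with $K$ subconjugate to $N$, get $K\to N$ by closure under restriction and conjugation --- which is legitimate here since the admissible relations form a transfer system by Theorem \ref{Prop:EquivWithTrSyst} --- and then transfer fiberwise over $H/K\to H/N$) is likewise the standard construction.

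Two caveats, neither fatal. First, the phrase ``contractible space of maps'' in the statement should be read as: the maps are parametrized by the contractible space $\mathcal{O}(n)^{\Gamma_{H,K}}$ of operadic data (``unique up to contractible choice''); your stronger claim that $f\mapsto\phi_f$ ``induces a weak equivalence onto the relevant component of the mapping space'' is neither needed for the statement nor justified by your argument, and you should drop or prove it rather than leave it as an aside. Second, in the third bullet the $H$-equivariance of the fiberwise transfer and its independence of coset representatives is exactly where the content of Blumberg--Hill's \S 7 lies: the grouping $\Top(H/K,X)\cong\prod_{hN\in H/N}X^{\times hN/K}$ is only $H$-stable up to permutation of the factors, and one must check that a single $\Gamma_{N,K}$-fixed element (together with its conjugates on the other fibres) assembles to an $H$-map; you correctly identify the mechanism (equivariance of the operadic composition plus the fixed-point property of $f$) but defer the verification, so as written this step is a sketch of the standard argument rather than a complete proof.
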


\section{Rubin's saturation conjecture}\label{Sub:SaturationConjecture}

We present in this section Rubin's conjecture on saturated transfer systems for cyclic groups and his description of universes by indexing sets. For any $n\in\N^\ast$ and $m\mid n$, let $C_n := \Cgp{n}$ and $mC_n := \Zgp{m}{n}$. In particular, we use additive notation. Propositions \ref{Prop:AdmissibleSetsLAndD} and \ref{Prop:SaturatedLinIsom} already appear in \cite{BlumbergHill}, we repeat their proofs below only for completeness, and to add details. \\

By Theorem \ref{Prop:EquivWithTrSyst}, any $G$-transfer system is realized by some $\Ninfty$-operad. Blumberg and Hill asked which ones could be realized by a linear isometries operad. The admissible relations for the latter can be characterized as follows.

\begin{Prop}[\protect{\cite[4.18]{BlumbergHill}}]\label{Prop:AdmissibleSetsLAndD} Given $K\leq H\leq G$, $H/K$ is admissible for $\mathcal{L}(\mathcal{U})$ if and only if there exists an $H$-equivariant embedding $\Z [H/K]\otimes \mathcal{U} \longrightarrow \mathcal{U}$.
\end{Prop}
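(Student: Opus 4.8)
The plan is to unwind Definition \ref{Def:Admissible} applied to the linear isometries operad $\mathcal{L}(\mathcal{U})$, and translate the nonemptiness of a fixed-point space of isometries into the existence of an equivariant linear embedding. Write $T = H/K$ and $d = |T| = [H:K]$. By definition, $T$ is admissible for $\mathcal{L}(\mathcal{U})$ if and only if $\mathcal{L}(\mathcal{U})(d)^{\Gamma_T} \neq \emptyset$, where $\Gamma_T \leq G \times S_d$ is the graph subgroup of the homomorphism $H \to S_d$ encoding the $H$-action on $T$. An element of $\mathcal{L}(\mathcal{U})(d)$ is a (not necessarily equivariant) linear isometry $f \colon \mathcal{U}^{\oplus d} \to \mathcal{U}$, and the $(G \times S_d)$-action is by conjugation on the source and target; so I would first spell out exactly what it means for such an $f$ to be fixed by $\Gamma_T$.

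The key computation is that $f$ is $\Gamma_T$-fixed precisely when $f$ is $H$-equivariant for the $H$-action on $\mathcal{U}^{\oplus d}$ in which $h \in H$ acts by the original $G$-action on each summand \emph{composed with} the permutation $\sigma_h$ of the $d$ summands coming from the $H$-set structure of $T$. In other words, the source $\mathcal{U}^{\oplus d}$ with this twisted action is exactly (isomorphic as an $H$-representation to) $\Z[H/K] \otimes \mathcal{U} = \R[T] \otimes_\R \mathcal{U}$, where $H$ acts diagonally on the permutation representation $\R[T]$ and on $\mathcal{U}$. I would make this identification explicit: choosing the standard basis $t_1, \dots, t_d$ of $\R[T]$, the map $\R[T] \otimes \mathcal{U} \xrightarrow{\sim} \mathcal{U}^{\oplus d}$, $t_i \otimes v \mapsto (0,\dots,v,\dots,0)$ with $v$ in slot $i$, intertwines the diagonal $H$-action with the twisted action, and it is an isometry for the natural inner product. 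Hence a $\Gamma_T$-fixed point of $\mathcal{L}(\mathcal{U})(d)$ is the same datum as an $H$-equivariant linear isometry $\Z[H/K] \otimes \mathcal{U} \to \mathcal{U}$.

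It remains to check that the existence of an $H$-equivariant linear \emph{isometry} $\Z[H/K] \otimes \mathcal{U} \to \mathcal{U}$ is equivalent to the existence of an $H$-equivariant linear \emph{embedding} (i.e.\ an injective $H$-equivariant linear map, not required to preserve the inner product). One direction is trivial. For the converse, given an equivariant embedding, one applies an equivariant Gram–Schmidt / orthogonalization argument: decompose both sides into isotypic components, and use that $\mathcal{U}$, being a $G$-universe, contains each irreducible $H$-subrepresentation infinitely often, so any countable-dimensional $H$-subrepresentation of $\mathcal{U}$ admits an equivariant isometric embedding into $\mathcal{U}$; precomposing the embedding with a suitable equivariant isometry of its image makes the composite isometric. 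Since $\Z[H/K] \otimes \mathcal{U}$ is itself a countable-dimensional representation in which, by the universe hypothesis on $\mathcal{U}$, each $H$-irreducible appears with the same (infinite) multiplicity as in $\mathcal{U}$, such an equivariant isometry into $\mathcal{U}$ exists as soon as an equivariant embedding does.

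The main obstacle is the last step: one has to be careful that ``embedding'' (merely injective linear equivariant map) can be upgraded to ``isometric embedding''. This is where the defining property of a $G$-universe — every subrepresentation, including the trivial one, occurs infinitely often — is essential, and the argument must be phrased so that it works for the restricted $H$-action and for the countably-infinite-dimensional $\mathcal{U}$ (so one cannot just count multiplicities as integers but must argue that a countable-dimensional $H$-representation embeds equivariantly-isometrically into $\mathcal{U}$ iff each of its irreducible constituents occurs in $\mathcal{U}$, which they all do since $\mathcal{U}$ contains a copy of the regular representation of $G$, hence of $H$, infinitely often). The identification of the twisted action on $\mathcal{U}^{\oplus d}$ with the diagonal action on $\Z[H/K] \otimes \mathcal{U}$, by contrast, is a routine bookkeeping check once the conventions for the $(G \times S_d)$-action and the graph subgroup are pinned down.
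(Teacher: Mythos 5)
Your core argument coincides with the paper's: unwind the $\Gamma_{H,K}$-fixed-point condition for an isometry $\mathcal{U}^{\oplus d}\to\mathcal{U}$ and identify the source, with its twisted ($\sigma_h$-permuted) $H$-action, with $\Z[H/K]\otimes\mathcal{U}$. The paper realizes this identification via $h_iK\otimes u\mapsto h_i\cdot u$ in the $i$-th slot, using the induced-representation action $\Z[H]\otimes_{\Z[K]}\mathcal{U}$ described in its Remark, whereas you use the diagonal action on $\R[H/K]\otimes\mathcal{U}$ and the obvious slot map; these are isomorphic $H$-representations, so the difference is purely notational. Your extra care about upgrading an equivariant \emph{embedding} (injective linear map) to an equivariant \emph{isometry} is legitimate and in fact needed: the paper dismisses the converse direction with ``the proof is similar,'' even though an arbitrary injection is not literally a point of $\mathcal{L}(\mathcal{U})(d)$, and later (Proposition \ref{Prop:SaturatedLinIsom}) the criterion is applied to an embedding that is visibly not an isometry.

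However, your justification of that upgrade step contains a genuine error: a $G$-universe need \emph{not} contain the regular representation of $G$. The definition only requires the trivial representation to occur and each sub-representation that does occur to occur infinitely often (e.g.\ $\bigoplus_{\N}\R_{\mathrm{triv}}$ is a universe). If $\mathcal{U}$ contained $\R[G]$, then $\mathcal{U}|_H$ would contain every $H$-irreducible infinitely often, the desired embedding would exist for all $K\leq H$, every relation would be admissible, and the proposition -- together with the indexing-set classification of Proposition \ref{Prop:IndexingSets} and the entire realization problem -- would be vacuous. The sentence asserting that each $H$-irreducible of $\Z[H/K]\otimes\mathcal{U}$ occurs in $\mathcal{U}$ ``with the same (infinite) multiplicity'' overreaches in the same way: precisely which constituents of $\Z[H/K]\otimes\mathcal{U}$ occur in $\mathcal{U}$ is what admissibility detects. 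The repair is the conditional argument you already sketch: if an $H$-equivariant injection exists, its image is an $H$-subrepresentation of $\mathcal{U}$ isomorphic to $\Z[H/K]\otimes\mathcal{U}$, so every irreducible constituent of the source occurs in $\mathcal{U}|_H$, hence (by the universe axiom, restricted to $H$) with countably infinite multiplicity there, while it occurs with at most countably infinite multiplicity in the countable-dimensional source; matching isotypic components via Schur's lemma then yields an $H$-equivariant isometric embedding, i.e.\ the required $\Gamma_{H,K}$-fixed point. With that correction your proposal is complete.
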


\begin{Rmk}
    The group $H$ acts on the tensor product $\Z\left[H/K\right]\otimes \mathcal{U}$ as follows: if $H/K = \{h_1K, \dots, h_kK\}$, and $h\in H$, $i\leq k$, $u\in\mathcal{U}$, there exists a unique $k_i(h)\in K$ with $hh_i = h_{\sigma_h(i)}k_i(h)$. We set $h\cdot (h_iK)\otimes u = h_{\sigma_h(i)}K \otimes k_i(h)u$. In particular, $\Z\left[H/K\right]\otimes \mathcal{U}$ is isomorphic as a representation of $H$ to $\Z[H]\otimes_{\Z[K]} \mathcal{U}$. 
\end{Rmk}
\begin{proof}
    Let us write $H/K=\{h_1K, \dots, h_nK\}$. A $\Gamma_{H,K}$-fixed point in $\mathcal{L}(\mathcal{U})(n)$ is by definition a $\Gamma_{H,K}$-equivariant map $F:\mathcal{U}^{\oplus n} \to \mathcal{U}$, where elements of the symmetric group permute the variables, and $H$ acts by conjugation. By definition $\Gamma_{H,K}$ consists of elements $(h,\sigma_h)$, where $\sigma_h$ describes the permutation induced on $H/K$ by $h\in H$. In particular, this fixed point gives us an $H$-equivariant embedding under the identification $\Z\left[H/K\right] \otimes \mathcal{U} \cong \mathcal{U}^{\oplus n}$, with $h_iK \otimes u$ sent to $h_i\cdot u$ on the $i$-th summand. Indeed, the map $f:\Z\left[H/K\right] \otimes \mathcal{U} \to \mathcal{U}$ obtained is $H$-equivariant. For all $h\in H$, $i\leq n$ and $u\in \mathcal{U}$, we obtain from the fixed point condition:
    \begin{align*}
    h\smash\cdot (f(h_iK\otimes u)) &= h\smash\cdot (F(0,\dots,h_iu,\dots,0)) \hspace{2.2cm}  \text{($h_iu$ in the $i$-th summand)} \\
    &= h\smash\cdot((h^{-1},\sigma_{h}^{-1})\smash\cdot F)(0,\dots,h_iu,\dots,0) \\
    &= hh^{-1}\smash\cdot (F(0, \dots, hh_iu, \dots, 0)) \hspace{0.45cm} \text{ ($hh_iu$ in the $\sigma_h(i)$-th summand)}\\
    &= f(h_{\sigma_h(i)}K\otimes k_i(h)u) \\
    &= f(h\smash\cdot(h_iK\otimes u)).
    \end{align*}
    The proof of the converse implication is similar (given an $H$-equivariant embedding $f:\Z\left[H/K\right] \otimes \mathcal{U} \to \mathcal{U}$, the corresponding map $\mathcal{U}^{\oplus n} \cong \Z\left[H/K\right] \otimes \mathcal{U}  \to \mathcal{U}$ is a $\Gamma_{H,K}$-fixed point in $\mathcal{L}(\mathcal{U})(n)$).
    \end{proof}

Going back to our realization problem, the following condition is necessary.
\begin{Prop}[\cite{BlumbergHill}] \label{Prop:SaturatedLinIsom} If $G$ is a finite group, then for any $G$-universe $\mathcal{U}$, the transfer system $\ucalC(\mathcal{L}(\mathcal{U}))$ is saturated, i.e., if $K\to H$ is admissible, and $K\leq N\leq H$ is an intermediary subgroup, then both $K\leq N$ and $N\leq H$ are admissible.
\end{Prop}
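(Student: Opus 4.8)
The plan is to reduce everything, via Proposition~\ref{Prop:AdmissibleSetsLAndD}, to a statement about representations: the hypothesis supplies an $H$-equivariant isometric embedding $\Z[H/K]\otimes\mathcal{U}\hookrightarrow\mathcal{U}$, and to prove that $K\to N$ and $N\to H$ are admissible it suffices to produce an $N$-equivariant embedding $\Z[N/K]\otimes\mathcal{U}\hookrightarrow\mathcal{U}$ and an $H$-equivariant embedding $\Z[H/N]\otimes\mathcal{U}\hookrightarrow\mathcal{U}$. I would build these two maps from the given one by splitting off the relevant permutation subrepresentation. Throughout I read $\Z[H/K]\otimes\mathcal{U}$ as the real $H$-representation $\R[H/K]\otimes_\R\mathcal{U}\cong\mathcal{U}^{\oplus[H:K]}$, and I fix a $G$-invariant inner product on $\mathcal{U}$ (available since $G$ is finite) together with the inner products on permutation representations for which coset bases are orthonormal; with these choices every subrepresentation splits off as an orthogonal summand, and inclusions of orthogonal summands are isometric embeddings.

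For $N\to H$: I would use that the canonical surjection of $H$-sets $H/K\twoheadrightarrow H/N$ admits a one-sided inverse on real permutation representations, namely the normalized transfer $\R[H/N]\to\R[H/K]$ sending $hN$ to $\tfrac{1}{\sqrt{[N:K]}}\sum_{h'K\subseteq hN}h'K$, which is an $H$-equivariant isometric embedding onto an orthogonal summand. Tensoring with $\mathcal{U}$ and post-composing with the embedding $\Z[H/K]\otimes\mathcal{U}\hookrightarrow\mathcal{U}$ of Proposition~\ref{Prop:AdmissibleSetsLAndD} (for the admissible relation $K\to H$) yields an $H$-equivariant isometric embedding $\Z[H/N]\otimes\mathcal{U}\hookrightarrow\mathcal{U}$, so $H/N$ is admissible and $N\to H$ lies in $\ucalC(\mathcal{L}(\mathcal{U}))$.

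For $K\to N$: I would restrict all actions to $N$. Because $K\leq N$, the map $\R[N/K]\to\operatorname{Res}^H_N\R[H/K]$ sending a coset $nK$ to the same coset in $H/K$ is a well-defined $N$-equivariant isometric embedding whose image — the span of those $hK$ with $h\in N$ — is $N$-invariant, hence an orthogonal $N$-summand (equivalently, it is the contribution of the trivial double coset in the Mackey decomposition of $\operatorname{Res}^H_N\operatorname{Ind}^H_K\R$). Tensoring with $\mathcal{U}$ and composing with the restriction to $N$ of the embedding $\Z[H/K]\otimes\mathcal{U}\hookrightarrow\mathcal{U}$ gives an $N$-equivariant isometric embedding $\Z[N/K]\otimes\mathcal{U}\hookrightarrow\mathcal{U}$, and Proposition~\ref{Prop:AdmissibleSetsLAndD} again gives admissibility of $K\to N$.

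The argument is essentially formal; the only points of care, and where I would expect the write-up to spend its effort, are checking that these sections and retractions are genuinely equivariant — for the transfer in the first step this is why one must pass to real rather than integral coefficients, since it divides by $[N:K]$ — and the bookkeeping (invariant inner products, normalization constants) needed to upgrade the summand inclusions to honest isometric embeddings. I do not anticipate any genuine obstacle beyond this.
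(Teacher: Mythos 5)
Your proposal is correct and follows essentially the same route as the paper: the crux for $N\to H$ is the same transfer map $\R[H/N]\to\R[H/K]$ composed with the given embedding of $\Z[H/K]\otimes\mathcal{U}$ into $\mathcal{U}$ (the paper uses the un-normalized sum $hN\mapsto\sum_{h'K\subseteq hN}h'K$, written in the balanced form $\Z[H]\otimes_{\Z[N]}\mathcal{U}\to\Z[H]\otimes_{\Z[K]}\mathcal{U}$, and deduces injectivity because composing with the natural projection gives $[N:K]\cdot\id{}$, whereas you normalize it into an isometric summand inclusion). The only other, cosmetic, difference is that for $K\to N$ the paper simply invokes closure under restriction of the transfer system $\ucalC(\mathcal{L}(\mathcal{U}))$, while you re-derive that fact representation-theoretically via the $N$-equivariant inclusion $\R[N/K]\hookrightarrow\operatorname{Res}^H_N\R[H/K]$.
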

\begin{proof}
    The admissibility of $K = N \cap K \leq N$ is by restriction of $K\to H$. For the other relation, by Theorem \ref{Prop:AdmissibleSetsLAndD}, we need an $H$-embedding $\Z [H/N]\otimes \mathcal{U} \to \mathcal{U}$. Since $K\leq H$ is admissible, there is an $H$-equivariant embedding ${\Z[H/K]\otimes \mathcal{U} \to \mathcal{U}}$. It therefore suffices to find an $H$-embedding $\Z [H/N]\otimes \mathcal{U} \to \Z [H/K]\otimes \mathcal{U}$, or equivalently, $f: \Z[H]\otimes_{\Z[N]} \mathcal{U} \to \Z[H] \otimes_{\Z[K]} \mathcal{U}$. Write $N/K = \{n_1K, \dots,n_kK\}$. We define $f$ as the linear extension of the assignment ${h\otimes u \mapsto \sum_{i\leq k} hn_i \otimes (n_i)^{-1}u}$ for any $u\in\mathcal{U}$, $h\in H$. This is well-defined since for $n\in N$, if $nn_i = n_{\sigma_n(i)}k_i(n)$ for $k_i(n)\in K$ then:
    \begin{align*}
    f(hn\otimes n^{-1}u) &= \sum_{i\leq k} hnn_i \otimes (n_i)^{-1}n^{-1}u = \sum_{i\leq k} hn_{\sigma_n(i)}k_i(n) \otimes (n_i)^{-1}n^{-1}u \\
    &= \sum_{i\leq k} hn_{\sigma_n(i)} \otimes k_i(n)(nn_i)^{-1}u = \sum_{i\leq k} hn_{\sigma_n(i)} \otimes n_{\sigma_n(i)}^{-1}u \\
    &= f(h\otimes u).
    \end{align*}
    This is also $H$-equivariant since, for all $h,h'\in H$ and $u\in\mathcal{U}$:
    $$f(h\cdot(h'\otimes u)) = \sum_{i\leq k} hh'n_i \otimes (n_i)^{-1}u = h\cdot \left(\sum_{i\leq k} h'n_i \otimes (n_i)^{-1}u\right) = h\cdot f(h'\otimes u).$$
    The post-composition of $f$ by the map $\Z[H]\otimes_{\Z[K]} \mathcal{U} \to \Z[H] \otimes_{\Z[N]} \mathcal{U}$ sending $h\otimes_{\Z [K]} u \mapsto h\otimes_{\Z [N]} u$ is the map $[N:K]\cdot\id{}$, which is injective. Therefore, $f$ is an embedding.
\end{proof}

Other necessary conditions and characterizations are proved in \cite{RubinDetecting}, also for the transfer systems arising from Steiner operads.\\

Rubin conjectured the following:
\begin{Conjecture}[Rubin's saturation conjecture (see \cite{RubinDetecting})]\label{Conj:Saturation} Let $k\in \N^\ast$ and $e_1,\dots,e_k\in\N^\ast$. There exist integers $p_1,\dots,p_k$ depending on this choice such that for all $k$-uples of distinct primes $q_1,\dots,q_k$ with $q_i\geq p_i$ for all $i\leq k$, any saturated $(C_{q_1^{e_1}\cdots q_k^{e_k}})$-transfer system is realized by some linear isometries operad. In this case, we say that the group $C_{q_1^{e_1}\cdots q_k^{e_k}}$ satisfies the saturation conjecture.
\end{Conjecture}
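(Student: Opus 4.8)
The plan is to combine Rubin's reduction of the realization problem to modular arithmetic with a double induction — on the number $k$ of prime divisors and on $\sum_{i=1}^k e_i$ — taking the already-established cases $C_{p^n}$ (the case $k=1$, \cite{RubinDetecting}) and $C_{p^nq^m}$ (the case $k=2$, Theorem~\ref{Prop:SatConjp^nq^m}) as the base. Write $N = q_1^{e_1}\cdots q_k^{e_k}$ and identify $C_N \cong \prod_{i=1}^k C_{q_i^{e_i}}$, so that subgroups of $C_N$ are indexed by tuples $\mathbf{a} = (a_1,\dots,a_k)$ with $0 \le a_i \le e_i$, ordered coordinatewise, and a $C_N$-transfer system becomes a relation on the ``box'' $\prod_{i=1}^k[e_i]$. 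The first step is to recall Rubin's description of the linear isometries transfer systems: by Proposition~\ref{Prop:AdmissibleSetsLAndD}, a $C_N$-transfer system is realized by some $\mathcal{L}(\mathcal{U})$ precisely when it is of the form $\to_S$ for a suitable \emph{indexing set} $S$ — a subset of $\Z/N$ (equivalently of $\prod_i \Z/q_i^{e_i}$) recording which irreducible summands occur in $\mathcal{U}$ — where $\mathbf{a} \to_S \mathbf{b}$ holds iff $S$ is invariant under a prescribed group $\Lambda_{\mathbf{a},\mathbf{b}}$ of translations determined by $\mathbf{a}$ and $\mathbf{b}$. So the conjecture for $C_N$ amounts to: \emph{for every saturated transfer system on $\prod_i[e_i]$, and provided each $q_i$ is large enough, produce an indexing set $S$ realizing it.}

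Next I would use — and, for $k \ge 3$, first establish, generalizing \cite{SatLinIsomTrSyst} — the combinatorial classification of saturated transfer systems on $\prod_i[e_i]$ in terms of \emph{saturated covers} of the box by sub-boxes together with \emph{compatible codes}: the relation is ``full'' inside each box of the cover, and the code records, direction by direction, how the pieces fit together. This is what makes the inductive bookkeeping possible, because restricting a saturated transfer system to a face $\{a_i = c\}$ or $\{a_i \le c\}$ of the box yields a saturated transfer system on a cyclic group of strictly smaller order, whose cover and code are the evident restrictions. For the inductive step one applies the induction hypothesis to the ``bottom'' and ``side'' faces to obtain indexing sets realizing those restricted transfer systems, and then glues: one searches for an indexing set $S$ on $\prod_i \Z/q_i^{e_i}$ whose restriction to each face of the indexing torus agrees — up to the permitted ambiguity — with the set already chosen there, and which in addition has exactly the $\Lambda_{\mathbf{a},\mathbf{b}}$-invariances forced by those relations among subgroups near the top corner $(e_1,\dots,e_k)$ that are invisible on every proper face.

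The main obstacle is precisely this gluing, which is concentrated on the ``topmost cube'' with corners $(\epsilon_1,\dots,\epsilon_k)$, $\epsilon_i \in \{e_i-1,e_i\}$, a $k$-dimensional cube. One must enumerate the possible shapes of the transfer system restricted to this cube — which of the relations among its $2^k$ subgroups and their intermediates are present, subject to transitivity, closure under restriction, and saturation — and for each shape exhibit an explicit $S$ with the prescribed translation-invariances that simultaneously respects all the face constraints. Both the number of shapes and the number of compatibility constraints between $S$ and its restrictions grow very fast with $k$, and this is where the hypothesis $q_i \ge p_i$ enters: enlarging $q_i$ enlarges the torus $\prod_i \Z/q_i^{e_i}$, hence the supply of translation classes available to encode the required pattern, so a pigeonhole/counting argument — the number of constraints being bounded in terms of $k$ and the $e_i$ alone — should yield a solution once the $p_i$ are chosen large enough. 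I expect that carrying this out uniformly in $k$ is exactly what makes the direct approach unmanageable in practice (in contrast with the more economical route of \cite{Macbrough}, which reduces everything to $2$-generated abelian quotients and a single uniform construction), but the inductive scheme above is, in principle, a complete proof strategy, and for any fixed group it produces an explicit indexing set and hence an explicit linear isometries operad.
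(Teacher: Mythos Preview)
The paper does \emph{not} prove this statement: it is stated as a conjecture, the paper's own contribution is the special case $k=2$ (Theorem~\ref{Prop:SatConjp^nq^m}), and the full conjecture is attributed to MacBrough, whose method is entirely different (tight pairs and their tensor products, bypassing the case analysis you describe). So there is no ``paper's own proof'' to compare your proposal against.

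As for the proposal itself, it is a sketch rather than a proof, and its gaps are precisely the ones the paper flags as the reason the direct approach was abandoned beyond $k=2$. Concretely: (i) you assume a classification of saturated transfer systems on $\prod_i[e_i]$ by saturated covers and compatible codes for $k\ge 3$, but this is only known for $k\le 2$ and you do not establish it; (ii) the inductive step is the whole difficulty, and you do not carry it out --- you describe enumerating the shapes of the topmost $k$-cube and ``for each shape exhibit an explicit $S$'', but already for $k=2$ this required a delicate case-by-case construction (the seven cases of \S\ref{Sub:cases}), and you give no construction for $k\ge 3$; (iii) the appeal to a ``pigeonhole/counting argument'' once the $q_i$ are large is not substantiated --- the constraints are not merely cardinality constraints but specific translation-invariance and non-invariance conditions that must hold \emph{simultaneously}, and there is no counting argument in the paper or in \cite{RubinDetecting} that produces such an $S$ automatically. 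The paper's own assessment (end of the Introduction) is that this route ``appears to be quite hopeless in practice, although an inductive argument seems theoretically possible''; your proposal is an accurate description of that hoped-for inductive argument, but it does not close the gap.
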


\begin{Notation}\label{Notation:subgpCn} Let $n$ be a positive integer with prime decomposition $p_1^{e_1}\dots p_k^{e_k}$. Then, we identify the poset of subgroups of $C_{n}$ with the product $[e_1]\times \cdots\times [e_k]$, where $[e_i]$ denotes the poset $\{0 < 1 < \dots < e_i\}$. Under this identification, the subgroup $p^{f_1}C_{p^{e_1}}\times\cdots\times p^{f_k}C_{p^{e_k}} \cong C_{p^{e_1-f_1}}\times \cdots \times C_{p^{e_k-f_k}}$ corresponds to $(e_1-f_1,\dots,e_k-f_k)$.
\end{Notation}

The number of $C_n$-universes grows exponentially with $n\in\N$. Indeed, there are $2^{\lfloor n/2 \rfloor}$ non-isomorphic $C_n$-universes (by Proposition \ref{Prop:IndexingSets} below), whereas the number of transfer systems is fixed if we fix the number of primes factors of $n$ and their exponents, but not the primes themselves. Therefore, when the $C_n$-universe varies, many linear isometries operads give rise to the same transfer system, and are therefore equivalent.\\

We repeat here that MacBrough has recently provided in \cite{Macbrough} an answer to Conjecture \ref{Conj:Saturation}. The particular cases of cyclic groups of order $p^n$ and $pq$ were proved in \cite{RubinDetecting}, and that of cyclic groups of order $qp^n$ in \cite{SatLinIsomTrSyst}, with $p,q \geq 5$ distinct primes and $n\in\N$ arbitrary. In the same paper, an explicit formula for the number of saturated transfer systems on $C_{p^nq^m}$ is computed.
Proposition \ref{Prop:IndexingSets} below, proved by Rubin, reduces the problem to an arithmetic one.

\begin{Notation} For $n\in \N^\ast$ and $0\leq j \leq n-1$, let $\lambda_n(j)$ be the two-dimensional real representation of $C_n$, where $[1]$ acts by multiplication by $e^{\frac{2\pi i j}{n}}$ in the complex plane. 
\end{Notation}
\begin{Def}\label{Def:indexing} Let $n\in\N^\ast$. An \emph{indexing set} for $C_n$ is a subset $I \subseteq C_n$ such that $0\in I$ and $-I \subseteq I$, where $-I := \{n-i \mid i \in I\}$. For each indexing set, we can define an \emph{associated $C_n$-universe} $\mathcal{U}_I := \bigoplus_{n\in \N} \bigoplus_{j\in I} \lambda_n(j)$. We say that $\mathcal{U}_I$ \emph{realizes} the associated transfer system $\ucalC(\mathcal{L}(\mathcal{U}_I))$.
\end{Def}

\begin{Prop}[\protect{\cite[5.14, 5.15]{RubinDetecting}}]\label{Prop:IndexingSets} Let $G = C_{n}$ for $n\in \N^\ast$.
\begin{enumerate}[label=(\roman*), leftmargin=*]
    \itemsep0ex
    \item Any $G$-universe is of the form $\mathcal{U}_I$ for some indexing set $I$.
    \item The relation $C_{d} \cong (n/d)C_n \to (n/e)C_n \cong C_{e}$ for $d\mid e \mid n$ is admissible in $\ucalC(\mathcal{L}(\mathcal{U}_I))$ if and only if $(I \Mod{e})+d = I \Mod{e}$ (in particular it suffices to check that $(I \Mod{e})+d \subseteq I \Mod{e}$). 
\end{enumerate}
\end{Prop}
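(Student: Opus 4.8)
The plan is to reduce the statement to the semisimple representation theory of $C_n$ together with elementary modular arithmetic. For (i), I would decompose an arbitrary $G$-universe $\mathcal{U}$ into its isotypic components: as a real representation of the finite group $C_n$ it is an orthogonal Hilbert direct sum of finite-dimensional real irreducibles, which are $\R_{\mathrm{triv}}$, the sign representation $\epsilon$ when $n$ is even, and the $\lambda_n(j)$ for $1\le j\le\lfloor(n-1)/2\rfloor$; I would also record the isomorphisms $\lambda_n(0)\cong\R_{\mathrm{triv}}^{\oplus2}$, $\lambda_n(j)\cong\lambda_n(n-j)$, and, for $n$ even, $\lambda_n(n/2)\cong\epsilon^{\oplus2}$. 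The universe axioms force every irreducible that occurs to occur with countably infinite multiplicity and force $\R_{\mathrm{triv}}$ to occur, so taking $I$ to consist of $0$, of $\pm j$ for each $j$ with $\lambda_n(j)$ occurring, and of $n/2$ when $\epsilon$ occurs, one reads off $\mathcal{U}\cong\mathcal{U}_I$; by construction $0\in I$ and $-I=I$, so $I$ is an indexing set. Tracking which subsets arise this way shows $I\mapsto\mathcal{U}_I$ is a bijection onto isomorphism classes, which is the source of the count $2^{\lfloor n/2\rfloor}$.

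For (ii), set $K=(n/d)C_n$ and $H=(n/e)C_n$, so that $K\le H$ and $H/K$ has $e/d$ elements, and apply Proposition~\ref{Prop:AdmissibleSetsLAndD} and the Remark following it: $C_d\to C_e$ is admissible for $\mathcal{L}(\mathcal{U}_I)$ if and only if there is an $H$-equivariant embedding $\mathbb{Z}[H/K]\otimes\mathcal{U}_I\hookrightarrow\mathcal{U}_I$ of real $H$-representations, $\mathcal{U}_I$ being restricted to $H$. Since an equivariant embedding between separable Hilbert-space representations of a finite group exists exactly when the multiplicity of each irreducible in the source is at most that in the target, and since a real representation of the abelian group $H$ embeds into another precisely when its complexification does, I would complexify and compare multiplicities of linear characters. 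Using the identification $C_e\xrightarrow{\ \sim\ }H$, $1\mapsto n/e$, under which $K$ corresponds to $(e/d)C_e$, one computes $\mathrm{Res}^{C_n}_H\chi_j=\psi_{j}$ with the subscript taken modulo $e$ (so the restriction depends only on $j\Mod{e}$), where $\psi_m$ denotes the character of $H\cong C_e$ sending the generator to $e^{2\pi i m/e}$. Since $(\mathcal{U}_I)_\CC\cong\bigoplus_\N\bigoplus_{j\in I}\chi_j$ (using $-I=I$), the character $\psi_{\bar m}$ has infinite multiplicity in $\mathrm{Res}_H(\mathcal{U}_I)_\CC$ when $\bar m\in I\Mod{e}$ and multiplicity $0$ otherwise; and $\mathbb{C}[H/K]\cong\bigoplus_{m\in\langle d\rangle}\psi_m$, where $\langle d\rangle\le\Z/e$ is the subgroup generated by $d$, these being precisely the characters of $H$ trivial on $K$. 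Tensoring, the source acquires $\psi_{\bar m}$ with infinite multiplicity exactly when $\bar m\in(I\Mod{e})+\langle d\rangle$, so the embedding exists if and only if $(I\Mod{e})+\langle d\rangle\subseteq I\Mod{e}$. As $I\Mod{e}$ is finite and $\langle d\rangle$ is the cyclic group generated by $d$, this is equivalent to $(I\Mod{e})+d\subseteq I\Mod{e}$, and hence to $(I\Mod{e})+d=I\Mod{e}$, which is the asserted criterion.

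I expect the representation-theoretic ingredients to be entirely standard, so the main obstacle is really the bookkeeping: the chain of identifications $C_d\cong K\le H\cong C_e\le C_n$, the action of $H$ on $\mathbb{Z}[H/K]$, and the two reductions modulo $e$ and modulo $d$ all have to be tracked carefully, as does the reduction of the set condition $+\langle d\rangle$ to $+d$ and then to an equality. Worth stating cleanly along the way are the real–complex comparison of representations and, in part (i), the handling of the exceptional summands $\lambda_n(0)$ and $\lambda_n(n/2)$.
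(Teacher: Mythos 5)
Your proposal is correct, and there is nothing in the paper to compare it against in detail: Proposition \ref{Prop:IndexingSets} is quoted from Rubin (\cite[5.14, 5.15]{RubinDetecting}) without proof, the only argument in the text being the proof of Proposition \ref{Prop:AdmissibleSetsLAndD}, which you invoke at exactly the right point. Your route --- isotypic decomposition of universes for (i), and for (ii) the reduction via Proposition \ref{Prop:AdmissibleSetsLAndD} to comparing character multiplicities of $\Z[H/K]\otimes\mathcal{U}_I$ and $\mathcal{U}_I$ over $H$, followed by the finiteness argument turning $(I\Mod{e})+d\subseteq I\Mod{e}$ into an equality --- is essentially the standard argument Rubin gives, so no genuinely different method or gap is involved.
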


The proofs of the special cases of the saturation conjecture mentioned above consist in building explicitly an indexing set realizing any given saturated transfer system. Applying the same method, we prove in the next section the conjecture in the case of groups of the form $C_{p^nq^m}$ with $p,q\geq 5$ distinct primes.

\begin{Rmk}\label{Rmk:SaturationNotSufficient} If either $p\leq 3$ or $q\leq 3$ (and $n,m\geq 1$), there are saturated transfer systems that are not realized by any linear isometries operad. Indeed, assume $p\leq 3$ and $n,m\geq 1$, and consider the saturated transfer system on $[n]\times [m]$ (see Notation \ref{Notation:subgpCn}) consisting of the single map $(0,0) \to (0,1)$ (i.e., $\{0\} \to C_{q}$ in terms of subgroups) and the identities. It is not realized by any linear isometries operad, by the following argument. If $I\subseteq C_{p^nq^m}$ was an indexing set realizing it, then $I\Mod{pq} := J$ would realize the restriction of the transfer system to $[1]\times [1]$, because, by Proposition \ref{Prop:IndexingSets}, its admissible relations are characterized by translation invariance properties modulo $p$, $q$ or $pq$, which are just the same for $I$. This is impossible, as proved in \cite[5.22]{RubinDetecting}. Indeed, this would imply that $J\subseteq pC_{pq}$: else, ${J\Mod{p}\neq \{0\}}$, but there are only one or two (inverse to one another) non-trivial element(s) modulo $p$ if $p\leq 3$, so $J \Mod{p}= C_{p}$. Therefore, $J \Mod{p}$ would be invariant by translation by 1, so $\{0\} \to C_{p}$ would be admissible. Now, since $\{0\} \to C_{q}$ is admissible, $J\Mod{q}$ is invariant by translation by $1$, so $J \Mod{q} = C_{q}$. Therefore, $J = pC_{pq}$ (if $|J|<q$ then also $|J \Mod{q}|<q$). But then $J$ is invariant by translation by $p$, and so $C_{p} \to C_{pq}$ is admissible (i.e., $(1,0) \to (1,1)$) which is a contradiction. The case $q\leq 3$ is symmetric to the one we just considered.
\end{Rmk}

\section{Proof of the main Theorem}\label{Sub:Proof}
\addtocontents{toc}{\protect\setcounter{tocdepth}{1}}

In this section, we prove our main result, namely the following theorem.

\begin{Thm}[Saturation conjecture for $p^nq^m$]\label{Prop:SatConjp^nq^m} Let $p,q\geq 5$ be distinct primes, and $n,m\in \N$. Let $G = C_{p^nq^m}$. Then, any saturated $G$-transfer system is realized by some linear isometries operad on a $G$-universe $\mathcal{U}$.
\end{Thm}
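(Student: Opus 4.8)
The plan is to prove the theorem by induction on $n+m$, reducing at each stage to the already-established cases ($p^nq^m$ with $\min(n,m)\le 1$, proved in \cite{RubinDetecting} and \cite{SatLinIsomTrSyst}) by carefully gluing together indexing sets that realize restrictions of the given transfer system to smaller ``sub-rectangles'' of $[n]\times[m]$. Concretely, by Proposition \ref{Prop:IndexingSets}, the task is arithmetic: given a saturated transfer system $\mathcal{T}$ on the poset $[n]\times[m]$ of subgroups of $C_{p^nq^m}$, I must build an indexing set $I\subseteq C_{p^nq^m}$ (i.e. $0\in I$, $-I\subseteq I$) such that for all $d\mid e\mid p^nq^m$, the relation $C_d\to C_e$ lies in $\mathcal{T}$ if and only if $(I \Mod e)+d = I \Mod e$. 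The key structural input is that a saturated transfer system on $[n]\times[m]$ is determined by ``saturated covers'' of $[n]\times[m]$ together with ``compatible codes'', as recalled from \cite{SatLinIsomTrSyst}; this combinatorial description is what makes an explicit construction feasible, and it is also what tells us which restrictions of $\mathcal{T}$ we need to realize and how they fit together.

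First I would set up the inductive scaffolding: assume the result for all $C_{p^{n'}q^{m'}}$ with $n'+m' < n+m$ (and separately handle the base cases, which are exactly the previously proven instances). Given $\mathcal{T}$ on $[n]\times[m]$, consider its restrictions $\mathcal{T}|_{[n-1]\times[m]}$ and $\mathcal{T}|_{[n]\times[m-1]}$ (restriction of a saturated transfer system is saturated), obtain realizing indexing sets $I'\subseteq C_{p^{n-1}q^m}$ and $I''\subseteq C_{p^nq^{m-1}}$ by induction, and note that both restrict further to a common realizing set for $\mathcal{T}|_{[n-1]\times[m-1]}$ — so $I'$ and $I''$ agree modulo $p^{n-1}q^{m-1}$ up to the freedom allowed by Proposition \ref{Prop:IndexingSets}. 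The goal is to assemble a set $I$ in $C_{p^nq^m}$ projecting to (something equivalent to) $I'$ modulo $p^{n-1}q^m$ and to $I''$ modulo $p^nq^{m-1}$, while correctly encoding the handful of new admissible relations involving the top row $C_d\to C_{p^nq^{m}}$ and top column — relations governed by translation invariance of $I$ modulo $p^nq^{m}$ itself. Here I would distinguish cases according to the behaviour of the transfer system on the ``topmost square'' with corners among $(n-1,n)\times(m-1,m)$ (which of the edges $(n-1,m-1)\to(n,m-1)$, $(n-1,m-1)\to(n-1,m)$, $(n,m-1)\to(n,m)$, $(n-1,m)\to(n,m)$, and the diagonal, belong to $\mathcal{T}$), since saturation constrains these configurations sharply, and for each configuration write down an explicit formula for $I$ as a union of translates of the inductively given pieces, using that $p,q\ge 5$ gives enough ``room'' (several distinct non-zero residues closed under negation) to realize a relation without accidentally realizing a forbidden one — the obstruction exploited in Remark \ref{Rmk:SaturationNotSufficient} for small primes.

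The main obstacle I expect is the consistency/compatibility bookkeeping: ensuring simultaneously that (a) $I \Mod{p^aq^b}$ has exactly the translation invariances dictated by $\mathcal{T}$ for every $(a,b)$, not just the maximal ones, (b) the two inductively supplied pieces $I'$, $I''$ can actually be realized by restrictions of a single $I$ (they may need to be replaced by equivalent indexing sets first, using that the universe, not the indexing set, is what matters), and (c) no spurious invariance is created when taking unions of translates — a translate of an invariant set can acquire new invariances. Making (c) fail-safe is exactly where the hypothesis $p,q\ge 5$ is essential and where the combinatorics genuinely gets intricate; I would control it by choosing the translates from a fixed small ``template'' subset of residues whose translation-invariance group is tightly pinned down, and verifying the finitely many divisibility conditions one pair $(a,b)$ at a time. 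Once an $I$ passing all these checks is exhibited, Proposition \ref{Prop:IndexingSets} immediately yields that $\mathcal{L}(\mathcal{U}_I)$ realizes $\mathcal{T}$, completing the induction.
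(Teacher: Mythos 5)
Your overall strategy --- induction on the size of the grid, a case analysis on the topmost square, explicit unions of translates, and using $p,q\ge 5$ to create ``room'' --- is the same as the paper's. But there is a genuine gap at the step you label (b), and it is not mere bookkeeping: it is the crux of the argument. You propose to obtain $I'\subseteq C_{p^{n-1}q^m}$ and $I''\subseteq C_{p^nq^{m-1}}$ independently from the induction hypothesis and then glue them. These two sets only realize the same transfer system on $[n-1]\times[m-1]$; nothing forces their reductions modulo $p^{n-1}q^{m-1}$ to be equal, and Proposition \ref{Prop:IndexingSets} provides no ``freedom'' identifying them: indexing sets correspond bijectively to universes, so ``replacing $I'$, $I''$ by equivalent indexing sets'' has no content, and you have no lemma saying two realizing sets can be modified to agree on the overlap. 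Without exact equality of the reductions, the explicit gluing formulas (unions of translates of the two pieces) cannot even be written down, let alone checked against Proposition \ref{Prop:IndexingSets}.

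The paper resolves exactly this by making the induction hypothesis \emph{relative}: Lemma \ref{Prop:InductionStepProof} takes as input an arbitrary realizing indexing set $J$ for the restriction and produces $K$ with $K\Mod{p^nq^m}=J$ on the nose (and Lemma \ref{Prop:BaseCaseProof} does the same in the rank-one case), so the two pieces being glued agree exactly by construction rather than by an after-the-fact adjustment. In addition, the extension step needs the auxiliary condition $(\star)$ (the reductions contain non-zero multiples of $p^iq^m$), which must be carried through the induction: it is what permits the Bezout-type normalizations in Notation \ref{Notation} (forcing $r_0\neq 0$, $s_0\neq 0$) and the constructions in cases (IV.c) and (IV.d). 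Your plan of invoking the previously proven cases $\min(n,m)\le 1$ as black boxes fails for the same reason: those results supply \emph{some} realizing indexing set, not one restricting to a prescribed $J$ and satisfying $(\star)$, which is why the paper re-proves the $C_{q^m}$ case in the sharper form of Lemma \ref{Prop:BaseCaseProof}. Until you formulate and prove such a relative, $(\star)$-augmented extension lemma, your induction does not close.
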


Our proof is quite technical. Even if it does not bring essentially new ideas, it highlights the increasing complexity of the combinatorics one needs to understand when moving from the case $p^nq$ to $p^nq^m$.

\subsection{Outline of the proof}
We fix $p,q\geq 5$ distinct primes, and proceed by induction on $n$ and $m$. We separate base case and induction step in two lemmata. Recall the identification of the poset of subgroups of $C_{p^nq^m}$ with $[n]\times [m]$ in Notation \ref{Notation:subgpCn}.

\begin{Lemma}\label{Prop:BaseCaseProof}
    Let $m\geq 1$, and let $\mathcal{T}$ be a saturated $C_{q^{m+1}}$-transfer system. Given any indexing set $J\subseteq C_{q^m}$ realizing the restriction of $\mathcal{T}$ to $[m]$, there exists an indexing set $I \subseteq C_{q^{m+1}}$ realizing $\mathcal{T}$, with $I \Mod{q^m} = J$, and containing $q^m$. 
    
    Any saturated $C_{q^{m+1}}$-transfer system can be realized by an indexing set containing $q^i$, for all $0 \leq i\leq m$.
\end{Lemma}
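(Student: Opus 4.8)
The plan is to rephrase realizability in terms of subsets of cyclic groups via Proposition~\ref{Prop:IndexingSets}(ii) and then to build $I$ explicitly, controlling one ``level'' at a time. Identifying the subgroups of $C_{q^k}$ with the chain $[k]$ (Notation~\ref{Notation:subgpCn}), a saturated transfer system on $[k]$ is determined by the set $S=\{\,i<k : i\to i+1\,\}$: indeed $a\to b$ forces $i\to i+1$ for all $a\le i<b$ by two applications of saturation, and conversely such a chain of relations gives $a\to b$ by transitivity. By Proposition~\ref{Prop:SaturatedLinIsom} every transfer system $\ucalC(\mathcal{L}(\mathcal{U}_I))$ is saturated, hence determined by the set $S_I=\{\,i : (I\bmod q^{i+1})+q^i=I\bmod q^{i+1}\,\}$, so realizing a saturated $\mathcal{T}$ with associated set $S$ amounts to finding an indexing set $I$ with $S_I=S$. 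Since the restriction of $\mathcal{T}$ to $[m]$ has associated set $S\cap\{0,\dots,m-1\}$, and since any $I\subseteq C_{q^{m+1}}$ with $I\bmod q^m=J$ satisfies $I\bmod q^{i+1}=J\bmod q^{i+1}$ for all $i\le m-1$ and hence $S_I\cap\{0,\dots,m-1\}=S_J$, the only relation left to pin down when extending $J$ to $I$ is whether $m\in S_I$, i.e.\ whether $I+q^m=I$ in $C_{q^{m+1}}$.

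For the first statement, identify $C_{q^m}$ with $\{0,1,\dots,q^m-1\}\subseteq C_{q^{m+1}}$ via least nonnegative representatives and let $F_j=\{\,j+tq^m : 0\le t\le q-1\,\}$ be the fibre over $j$ of the reduction $C_{q^{m+1}}\to C_{q^m}$. If $m\in S$, I would take $I:=\bigcup_{j\in J}F_j$, the full preimage of $J$: it is a union of cosets of $\langle q^m\rangle$, so $I+q^m=I$ and $m\in S_I$, while clearly $I\bmod q^m=J$, $0\in I$, $-I=I$ (since $-F_j$ is the fibre over $-j$ and $-J=J$), and $q^m\in F_0\subseteq I$. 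If $m\notin S$, split $J\setminus\{0\}$ into its pairs $\{j,-j\}$ (unambiguously, as $q$ is odd so no nonzero element of $C_{q^m}$ equals its negative), let $J^+:=\{\,j\in J : 0<j<q^m/2\,\}$ be the ``small'' representatives, and set
\[
I:=\{0,q^m,(q-1)q^m\}\cup J^+\cup\{\,q^{m+1}-j:j\in J^+\,\}.
\]
Then $I\bmod q^m=J$ (the three displayed pieces reduce modulo $q^m$ to $\{0\}$, to $J^+$, and to $\{\,-j : j\in J^+\,\}$, whose union is $J$), $0\in I$, $I=-I$ (negation permutes the three pieces), and $q^m\in I$; moreover $m\notin S_I$, because $q^m\in I$ while $2q^m=q^m+q^m$ differs from each of $0,q^m,(q-1)q^m$, which are the only elements of $I$ lying in the fibre $F_0$ — and here $2\not\equiv 0,1,q-1\pmod q$ is exactly the place where $q\ge5$ enters. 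In both cases $S_I=S$ by the first paragraph, so $\mathcal{U}_I$ realizes $\mathcal{T}$, with $I\bmod q^m=J$ and $q^m\in I$.

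For the second statement I would induct on $m$. The base case $m=0$ concerns $C_q$, which has exactly two saturated transfer systems: the trivial one, realized by $I=\{0,1,q-1\}$ — a nonempty proper (since $q\ge5$) symmetric subset, whence $I+1\ne I$, i.e.\ $\{0\}\to C_q$ is inadmissible — and the complete one, realized by $I=C_q$; both contain $q^0=1$. For the inductive step ($m\ge1$), given a saturated transfer system $\mathcal{T}$ on $C_{q^{m+1}}$, its restriction to $[m]$ is a saturated $C_{q^m}$-transfer system, hence by the inductive hypothesis realized by an indexing set $J$ containing $q^i$ for all $0\le i\le m-1$; feeding this $J$ into the construction above yields $I$ realizing $\mathcal{T}$, and inspecting the two cases shows $I$ still contains every $q^i$: if $m\in S$ then $q^i\in F_{q^i}\subseteq I$ for $i<m$ and $q^m\in I$; if $m\notin S$ then $q^i\in(0,q^m/2)$ for $i<m$ (as $q>2$), so $q^i\in J^+\subseteq I$, while again $q^m\in I$. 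This closes the induction.

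I expect the one genuinely delicate point to be the verification, in the case $m\notin S$, that forcing $q^m$ into $I$ — which the lemma demands — does not accidentally make the relation $C_{q^m}\to C_{q^{m+1}}$ admissible; this is precisely where $q\ge5$ is needed, through $2\not\equiv q-1\pmod q$, and it is the combinatorial reason behind the failure of the statement for $q=3$ recorded in Remark~\ref{Rmk:SaturationNotSufficient}. Everything else should be routine bookkeeping, since the condition $I\bmod q^m=J$ already fixes all relations below level $m$.
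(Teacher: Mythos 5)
Your proof is correct and follows essentially the same route as the paper: reduce via Proposition \ref{Prop:IndexingSets} to matching the admissible cover relations, take the full preimage of $J$ when $m\to m+1$ is admissible, otherwise a symmetric lift containing $q^m$ but not $2q^m$ (which is exactly where $q\ge 5$ enters), and then induct for the second statement. The only difference is cosmetic: in the non-admissible case the paper uses $I=\{\pm(j+\eps q^m)\mid j\in J,\ \eps\in\{0,1\}\}$ and rules out $2q^m$ by a size estimate, while you lift the small representatives together with $\{0,q^m,(q-1)q^m\}$ and rule out $2q^m$ by a congruence check mod $q$; both verifications are valid.
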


\begin{Notation} Let $n,m\in \N$. We say that an indexing set $I \subseteq C_{p^nq^m}$ satisfies $(\star)$ if $I \Mod{p^{i+1}q^m}$ contains a non-zero multiple of $p^iq^m$ for all $0\leq i\leq n-1$.
\end{Notation}

\begin{Lemma}\label{Prop:InductionStepProof} Let $n,m\in \N$. Consider a saturated $C_{p^{n}q^{m+1}}$-transfer system $\mathcal{T}$, and an indexing set $J\subseteq C_{p^{n}q^{m}}$ realizing the restriction of $\mathcal{T}$ to $C_{p^{n}q^{m}}$, and satisfying $(\star)$. Then, there exists an indexing set $K\subseteq C_{p^{n}q^{m+1}}$ realizing $\mathcal{T}$ and satisfying $(\star)$, such that $K$ contains a non-zero multiple of $p^nq^m$ and $K \Mod{p^nq^m} = J$.
\end{Lemma}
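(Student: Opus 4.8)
The plan is to build $K$ as a carefully chosen lift of $J$ along the quotient $C_{p^nq^{m+1}}\twoheadrightarrow C_{p^nq^m}$, using the freedom in the lift to realize exactly the ``new'' relations of $\mathcal T$ without disturbing the old ones.

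\textbf{Reduction to the top row.} A relation $(a,b)\to(c,d)$ of $\mathcal T$ with $d\le m$ lies in $\mathcal T|_{[n]\times[m]}$, and by Proposition \ref{Prop:IndexingSets}(ii) its admissibility for $K$ is detected modulo $p^cq^d\mid p^nq^m$, hence depends only on $K\Mod{p^nq^m}$; so if we impose $K\Mod{p^nq^m}=J$, every such relation is automatically realized, since $J$ realizes $\mathcal T|_{[n]\times[m]}$. Thus the real task is: for every $0\le c\le n$, make the translation-invariances of $K\Mod{p^cq^{m+1}}$ agree with $S_c:=\{(a,b):(a,b)\to(c,m+1)\in\mathcal T\}$, that is, $K\Mod{p^cq^{m+1}}$ is $(+p^aq^b)$-invariant iff $(a,b)\in S_c$. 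Using only the restriction and transitivity axioms, each $S_c$ is closed under coordinatewise minimum (invariance under $+p^aq^b$ and $+p^{a'}q^{b'}$ forces invariance under $+\gcd$), and saturation makes $S_c$ an up-set; being a nonempty up-set closed under minimum in $[0,c]\times[0,m+1]$, it is the principal up-set of a unique pair $(\alpha_c,\beta_c)$, so that $(a,b)\to(c,m+1)$ is admissible iff $p^{\alpha_c}q^{\beta_c}\mid p^aq^b$. Saturation further gives the compatibilities among the $(\alpha_c,\beta_c)$, and with the minimal pairs of $J$ at level $m$, which are necessary for a single $K$ to realize all $S_c$ at once.

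\textbf{Construction of $K$.} One starts from a negation-symmetric lift of $J$ that places, over $0\in J$, the elements $0$, one non-zero multiple $r$ of $p^nq^m$, and $-r$, but not all of $\langle p^nq^m\rangle$ — possible because $q\ge5$ leaves at least two multiples unused while keeping the set symmetric. This secures both $K\Mod{p^nq^m}=J$ and the required non-zero multiple of $p^nq^m$. One then enlarges the lift by adjoining, for each $c$, the full $\langle p^{\alpha_c}q^{\beta_c}\rangle$-orbits of the elements already chosen; since $\mathcal T$ is saturated and $J$ realizes $\mathcal T|_{[n]\times[m]}$, the set $J$ is already $(+p^{\alpha_c}q^{\beta_c})$-invariant modulo $q^m$ wherever this is required, so these enlargements do not change $K\Mod{p^nq^m}$. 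Finally one adjoins, for each $0\le i\le n-1$, a single non-zero multiple of $p^iq^{m+1}$ together with its negative — chosen ``small'' using $p\ge5$, so as to avoid any orbit that would create an unwanted invariance — which secures $(\star)$ for $K$.

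\textbf{Verification and main obstacle.} It remains to check: $0\in K$ and $-K\subseteq K$, and $K\Mod{p^nq^m}=J$ with the non-zero-multiple clause (all built into the construction); $(\star)$ for $K$ (from the adjoined witnesses); and, for each $c$, that $K\Mod{p^cq^{m+1}}$ has exactly the invariances prescribed by $S_c$. The ``enough invariance'' half is immediate, since the generating orbit $\langle p^{\alpha_c}q^{\beta_c}\rangle$ was adjoined by design. The difficult half is ``not too much invariance'': for every $(a,b)\notin S_c$ one must exhibit an element of $K\Mod{p^cq^{m+1}}$ whose $(+p^aq^b)$-translate is absent. This requires pinning down exactly which residues modulo $p^cq^{m+1}$ occur in $K$, and the argument branches according to the shape of the ``topmost cube'' — the relative position of $(\alpha_n,\beta_n)$ and of the corresponding minimal pairs for $J$ — much as in the proof of Lemma \ref{Prop:BaseCaseProof}, whose conclusion (with the roles of $p$ and $q$ exchanged) one invokes to control the topmost column $\{(a,m+1):0\le a\le n\}$. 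The hypotheses $p,q\ge5$ and $(\star)$ for $J$ enter precisely here: they are exactly what produces witnesses of non-invariance — room to omit multiples while staying negation-symmetric, and a non-invariance witness in the $p$-direction at level $q^m$ that survives to level $q^{m+1}$ — and for $p$ or $q\le3$ no such witnesses exist, as Remark \ref{Rmk:SaturationNotSufficient} shows. I expect this last verification, with its case analysis on the topmost cube, to be the principal difficulty.
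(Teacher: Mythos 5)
Your opening reduction is correct and in the spirit of the paper: relations with target in $[n]\times[m]$ are detected modulo divisors of $p^nq^m$, hence are automatic once $K\Mod{p^nq^m}=J$, and each set $S_c$ is indeed a principal up-set (closure under meet follows from restriction plus transitivity, as you say). But from there the argument has a genuine gap, in two respects. First, the construction as sketched does not work: adjoining, for $c<n$, the full $\langle p^{\alpha_c}q^{\beta_c}\rangle$-orbits of elements of $K$ inside $C_{p^nq^{m+1}}$ makes $K$ itself $p^{\alpha_c}q^{\beta_c}$-translation invariant, which by Proposition \ref{Prop:IndexingSets} forces $(\alpha_c,\beta_c)\to(n,m+1)$ to be admissible even when $\mathcal T$ does not contain it; moreover your claim that these enlargements do not change $K\Mod{p^nq^m}$ is unjustified, since $(\alpha_c,\beta_c)\to(c,m+1)$ only gives (by restriction) invariance of $J\Mod{p^cq^m}$, not of $J$ itself, so for $c<n$ the adjoined orbits can introduce new residues modulo $p^nq^m$. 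If instead the orbits are meant only at the level of $K\Mod{p^cq^{m+1}}$, one must choose lifts back to $C_{p^nq^{m+1}}$, and making those choices coherently for all $c$ at once while preserving $K\Mod{p^nq^m}=J$ and avoiding unwanted invariances is precisely the substance of the lemma. Second, you explicitly defer the ``not too much invariance'' verification (the case analysis you anticipate); but that verification is not a routine check to be filled in later --- it is where essentially all the work lies, and where the hypotheses $p,q\ge 5$ and $(\star)$ are actually consumed.

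For comparison, the paper does not attack all levels $c$ simultaneously: it inducts on $n$, using the inductive hypothesis to produce an indexing set $I\subseteq C_{p^{n-1}q^{m+1}}$ realizing $\mathcal T'=\mathcal T|_{[n-1]\times[m+1]}$ with $I\Mod{p^{n-1}q^m}=J\Mod{p^{n-1}q^m}$, so that the only data left to control is the admissibility of the two cover relations of the top-right square. It then glues $I$ and $J$ into $K$ by explicit formulas involving Bezout coefficients (Notation \ref{Notation}), split into seven cases for that square, each with a concrete non-membership argument (e.g.\ exhibiting $2q^m$- or $2p^{n-1}q^{m+1}$-type elements that cannot lie in $K$) to rule out unwanted invariances. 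Your proposal lacks both this reduction to a single square and any executed analogue of the case-by-case constructions, so as it stands it is a plausible plan rather than a proof.
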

We can imagine the situation as follows, with the large dots representing the non-zero multiples required in the different reductions of our indexing sets.

\begin{center}
\begin{tikzpicture}[scale=0.8]

 \draw[pattern={Lines[angle=-45,distance={12pt}]},pattern color=myblue] (0,0) rectangle (6,3);

 \draw[pattern={Dots[angle=45,distance={6pt}]},pattern color=mypink] (0,0) rectangle (6,4);

\draw[very thick] (0,0) rectangle (6,4);

\draw [gray!10, step=1.0cm] (-1,-1) grid (7,5);

\draw[very thick, myblue] (0,3) -- (6,3);
\foreach \x in {0,...,5} {
    \filldraw[fill=myblue, draw=myblue] (\x,3) circle (2pt);
}

\draw[very thick, mypink, dashed] (0,0) rectangle (6,4);
\foreach \x in {0,...,5} {
    \filldraw[fill=mypink, draw=mypink] (\x,4) circle (2pt);
}
\filldraw[fill=mypink, draw=mypink] (6,3) circle (2pt);

\node at (-0.4,0) {\small $0$};
\node at (-0.4,1) {\small $1$};
\node at (-0.4,3) {\small $m$};
\node at (-0.7,4) {\small $m+1$};
\node at (0,-0.4) {\small $0$};
\node at (1,-0.4) {\small $1$};
\node at (2,-0.4) {\small $2$};
\node at (5,-0.4) {\small $n - 1$};
\node at (6,-0.4) {\small $n$};

\node[mypink] at (6.5,3.5) {\small $K$};
\node[myblue] at (6.5,1.5) {\small $J$};

\end{tikzpicture}
\end{center}

Let us now see how this implies our result.

\begin{proof}[Proof of Theorem \ref{Prop:SatConjp^nq^m}] We will prove by induction on $m$ a slightly stronger statement, namely:
\begin{center}
    Any saturated transfer system $\mathcal{T}$ on $C_{p^nq^m}$, with $n\geq 1$ and $m\geq 0$,\\
    can be realized by an indexing set fulfilling $(\star)$.
\end{center}

Together with Lemma \ref{Prop:BaseCaseProof} for the case $n=0$, this will prove Theorem \ref{Prop:SatConjp^nq^m}.

To prove the stronger statement, fix $n\geq 1$ an integer. We proceed by induction on $m$. For $m=0$, the claim follows directly from the second part of Lemma \ref{Prop:BaseCaseProof}. Assume now our claim is true for some $m\in \N$. Taking $\mathcal{T}$ a saturated transfer system on $C_{p^nq^{m+1}}$, by the induction hypothesis its restriction to $[n]\times [m]$ can be realized by some indexing set $J$ satisfying $(\star)$. By applying Lemma \ref{Prop:InductionStepProof} we get an indexing set realizing $\mathcal{T}$, and satisfying $(\star)$, as desired.
\end{proof}

\subsection{Proof of Lemma \ref{Prop:BaseCaseProof}}
We recall once more that the saturation conjecture has already been proved by Rubin in \cite{RubinDetecting} for groups of the form $C_{q^m}$ with $m\in\N$ and $q$ any prime, by exhibiting an explicit indexing set. The crucial observation is that a saturated transfer system on $[m]$ is uniquely determined by its admissible \emph{cover} relations: by transitivity and saturation, a relation $i\to j$ with $i<j$ is admissible if and only if all relations $k\to k+1$ with $i\leq k \leq j-1$ are admissible. We prove this case again, assuming $q\geq 5$, because we need the more precise statement of Lemma \ref{Prop:BaseCaseProof} for the remainder of the proof.

\begin{proof} Let $m$, $\mathcal{T}$ and $J$ be as in the statement. We identify $J$ as a subset of $\{0,1,\dots,q^m -1\}$. We first assume that $m\to m+1$ is admissible in $\mathcal{T}$. Similarly to the proof of \cite[5.18]{RubinDetecting}, we define 
$$I = \{ \pm (j+\alpha q^m) \mid j\in J, 0\leq \alpha < q\}\subseteq C_{q^{m+1}}.$$
Then $I$ is an indexing set restricting to $J$: indeed, $0\in I$, $-I = I$, $I$ contains $q^m$ (since $0\in J$), and $I \Mod{q^m} = \{ \pm j \mid j\in J\} = J$. In particular, it suffices to check that $I$ admits $m\to m+1$ to show that $I$ realizes~$\mathcal{T}$: indeed the other cover relations admissible for $I$ match those of $\mathcal{T}$, because by Proposition \ref{Prop:IndexingSets} this only depends on $I\Mod{q^m} = J$ and $J$ realizes the restriction of $\mathcal{T}$ to $[m]$. This last cover relation is admissible in $I$, since $I + q^m \Mod{q^{m+1}} \subseteq I$ by construction (we can always replace $\alpha$ by its residue modulo $q$ since we work modulo $q^{m+1}$).

Now, if $m\to m+1$ is not admissible in $\mathcal{T}$, consider instead the indexing set $I = \{ \pm (j+\eps q^m) \mid j\in J, \eps\in\{0,1\}\}\subseteq C_{q^{m+1}}$. Then as before $I$ contains $q^m$, and $I \Mod{q^m} = J$. Hence, to show that $I$ realizes $\mathcal{T}$, we only have to check that $m\to m+1$ is not admissible in $I$, i.e., $I$ is not $q^m$-translation invariant. And indeed, $2q^m\notin I$: we have $0 < 2q^m < q^{m+1}$ since $q\geq 5$. And for all $j\in J$, $\eps\in\{0,1\}$, we have $q^{m+1}-(j+\eps q^m) > q^{m+1}-2q^m = (q-2)q^m > 2q^m$ (since $q\geq 5$) and $j + \eps q^m < q^m + q^m = 2q^m$. So $I$ realizes $\mathcal{T}$, as desired.

For the second part of the statement, just begin with the trivial indexing set in $C_1$, and then use inductively the first part of the claim to extend it to an indexing set realizing the restriction of $\mathcal{T}$ to $C_q$ first, and then $C_{q^2}$ and so on. By the way we constructed these extensions above, each successive indexing set contains the previous one, in particular it contains the required powers of $q$. 
\end{proof}

\subsection{Proof of Lemma \ref{Prop:InductionStepProof}}\label{Sub:cases}
\begin{proof} We take our inspiration from the proof of the case $C_{pq^m}$ in \cite{SatLinIsomTrSyst}. We proceed by induction on $n$. The case $n=0$ follows directly from Lemma \ref{Prop:BaseCaseProof}. Assume now that the statement holds for some fixed $n-1\in \N$ and every $m\in \N$. Let us show that it holds for $n$ and every $m\in \N$. As in the statement, consider a saturated $C_{p^{n}q^{m+1}}$-transfer system $\mathcal{T}$, and an indexing set $J\subseteq C_{p^{n}q^{m}}$ satisfying $(\star)$ and realizing the restriction of $\mathcal{T}$ to $C_{p^{n}q^{m}}$. Let $\mathcal{T}'$ be the restriction of $\mathcal{T}$ to $[n-1]\times [m+1]$. Then the restriction of $\mathcal{T}'$ to $[n-1]\times [m]$ is realized by $J' := J \Mod{p^{n-1}q^m} \subseteq C_{p^{n-1}q^m}$, and $J' \Mod{p^{i+1}q^m} = J \Mod{p^{i+1}q^m}$ contains a non-zero multiple of $p^iq^m$ for all $0\leq i < n-1$, so $J'$ satisfies $(\star)$ as well. By our induction hypothesis on $n$, we may therefore find an indexing set $I\subseteq C_{p^{n-1}q^{m+1}}$ satisfying $(\star)$, realizing $\mathcal{T}'$, containing a non-zero multiple of $p^{n-1}q^m$, and such that $I\Mod{p^{n-1}q^m} = J'$. We illustrate the situation as follows:

\begin{center}
\begin{tikzpicture}[scale=0.8]

\foreach \x in {0,...,6} {
    \foreach \y in {0,...,4} {
        \fill[black!50] (\x,\y) circle (1pt);
    }
}

 \draw[pattern={Dots[angle=45,distance={6pt}]},pattern color=mypink] (0,0) rectangle (6,4);

 \draw[pattern={Lines[angle=-45,distance={12pt}]},pattern color=myblue] (0,0) rectangle (6,3);

 \draw [gray!10, step=1.0cm] (-1,-1) grid (7,5);

\draw[very thick] (0,0) rectangle (6,4);
\draw[very thick, myblue, dashed] (5,0) -- (5,3);
\draw[very thick, mypink, dashed] (5,3) -- (5,4);

\draw[very thick, myblue] (0,3) -- (6,3);
\foreach \x in {0,...,5} {
    \filldraw[fill=myblue, draw=myblue] (\x,3) circle (2pt);
}

\draw[very thick, mypink, dashed] (0,4) -- (5,4);
\foreach \x in {0,...,4} {
    \filldraw[fill=mypink, draw=mypink] (\x,4) circle (2pt);
}

\filldraw[fill=myblue, draw=mypink, thick] (5,3) circle (2pt);

\node at (-0.4,0) {\small $0$};
\node at (-0.4,1) {\small $1$};
\node at (-0.4,3) {\small $m$};
\node at (-0.7,4) {\small $m+1$};
\node at (0,-0.4) {\small $0$};
\node at (5,-0.4) {\small $n - 1$};
\node at (6,-0.4) {\small $n$};

\node[mypink] at (2.5,3.5) {\small $I$};
\node[myblue] at (2.5,1.5) {\small $J$};
\node at (5.5,3.5) {\small $\mathcal{T}$};
\node[mypink] at (3.5,3.5) {\small $\mathcal{T}'$};

\end{tikzpicture}
\end{center}

We want to find a $C_{p^{n}q^{m+1}}$-indexing set $K$ such that $K \Mod{p^nq^m} = J$ and $K\Mod{p^{n-1}q^{m+1}} = I$. Since $I$ and $J$ are fixed, and the transfer system induced by $K$ is saturated, the latter is fully determined by the data of whether $(n,m)\to(n,m+1)$ and $(n-1,m+1)\to(n,m+1)$ are admissible for $K$. Hence, $K$ realizes $\mathcal{T}$ if and only if the admissibility of these two relations in $\mathcal{T}$ and $K$ is the same. Indeed, all other relations except $(n-1,m)\to (n,m+1)$ are determined by $I$ and $J$, and this last relation is admissible if and only if all sides of the top-right square are admissible. If $K$ is built in this way, we have $K \Mod{p^iq^{m+1}} = I \Mod{p^iq^{m+1}}$ for all $0\leq i\leq n-1$, and $I$ satisfies $(\star)$, so $K$ satisfies $(\star)$ if and only if $K$ contains a non-zero multiple of $p^{n-1}q^{m+1}$.\\

Let us name the possibilities for the top-right square, with corners $(n-1,m+1)$, $(n,m+1)$, $(n,m)$, and $(n-1,m)$:
\begin{center}
\begin{tikzpicture}[scale=0.7, every node/.style={font=\footnotesize}]

\drawbox{0}  {(I)}  {G}{C}{D}{D}
\drawbox{2}  {(II)} {C}{G}{D}{D}
\drawbox{4}  {(III)} {C}{C}{D}{D}
\drawbox{6.5}  {(IV.a)} {G}{G}{C}{C}
\drawbox{8.5}  {(IV.b)} {G}{G}{G}{G}
\drawbox{10.5} {(IV.c)} {G}{G}{G}{C}
\drawbox{12.5} {(IV.d)} {G}{G}{C}{G}

\end{tikzpicture}
\end{center}

Red represents admissibility in $\mathcal{T}$, and gray not being admissible. Stability under restriction, transitivity, and saturation of $\mathcal{T}$ then determine the status of the dashed maps:
\begin{center}
\begin{tikzpicture}[scale=0.7, every node/.style={font=\footnotesize}]

\tikzset{
    mypinkline/.style={draw=mypink, line width=2pt},
    grayline/.style={draw=gray, line width=2pt}
}

\drawbox{0}  {(I)}  {G}{C}{G}{C}
\drawbox{2}  {(II)} {C}{G}{C}{G}
\drawbox{4}  {(III)} {C}{C}{C}{C}
\drawbox{6.5}  {(IV.a)} {G}{G}{C}{C}
\drawbox{8.5}  {(IV.b)} {G}{G}{G}{G}
\drawbox{10.5} {(IV.c)} {G}{G}{G}{C}
\drawbox{12.5} {(IV.d)} {G}{G}{C}{G}

\end{tikzpicture}
\end{center}

\begin{Notation}\label{Notation} The condition $I\Mod{p^{n-1}q^m} = J \Mod{p^{n-1}q^m}$ implies that for all $i\in I$ and $j\in J$ there exists $j_i\in J, i_j\in I, \gamma_i,\delta_j\in\Z$ with $i = j_i +\gamma_ip^{n-1}q^m$ and $j=i_j+\delta_jp^{n-1}q^m$. Since $p$ and $q$ are distinct primes, by Bezout's identity, there exist $u,v\in \Z$ with $uq+vp=1$. By Euclidean division, let $-u\gamma_i = r'_i p + r_i$ with $0\leq r_i < p$ and $-v\delta_j = s'_j q + s_j$ with $0\leq s_j < q$, for all $i\in I$ and $j\in J$. If $r_0 = 0$, we modify our choices in the following way. By assumption $J$ contains a non-zero multiple of $p^{n-1}q^m$, say $\alpha p^{n-1}q^m$ with $0<\alpha< p$. Choose $j_0 = \alpha p^{n-1}q^m$ and $\gamma_0 = -\alpha$. Since $p\nmid u$, we get $p \nmid -u\gamma_0$, so $r_0 \neq 0$. Similarly, if $s_0=0$, using the fact that $I$ contains a non-zero multiple of $p^{n-1}q^m$, we modify our choices of $i_0$ and $\delta_0$ to ensure $s_0\neq 0$. We fix a choice of such integers $j_i$, $i_j$, $\delta_j$, $\gamma_i$, $u$, $v$ (and thus $r_i$, $r'_i$, $s_j$, and $s'_j$) throughout the rest of the proof. 
\end{Notation}

\subsubsection*{\normalfont \framebox{Cases (I) and (III)}} If $(n,m)\to (n,m+1)$ is in $\mathcal{T}$, consider, in $C_{p^nq^{m+1}}$:
$$K := \pm \{ r_i p^{n-1}q^{m+1} + i + kp^nq^m \mid i \in I, k \in \Z\} \cup (\pm\{j + kp^nq^m \mid j \in J, k \in \Z\}).$$ Firstly, $K$ contains $p^nq^m$ (setting $j=0$, $k=1$). Taking $i=0$, $k=0$ in the definition of $K$, we have $r_0 p^{n-1}q^{m+1}\in K$. The choices made in Notation \ref{Notation} ensure $0<r_0<p$ and thus $(\star)$ holds. Moreover, we have:

\begin{itemize}[leftmargin=*]
    \itemsep0ex
    \item $0\in K$ (setting $j=0\in J$ and $k = 0$) and $-K\subseteq K$ by construction.
    
    \item $K \Mod{p^nq^m} = J$, since $K \Mod{p^nq^m} = \pm \{ r_i p^{n-1}q^{m+1} + i \mid i \in I\} \cup (\pm J)$ (the minus signs remain unchanged because the inverse$\mod{p^nq^{m+1}}$ of an element ${0\leq x < p^nq^{m+1}}$ is given by $p^nq^{m+1}-x$, but$\mod{p^nq^m}$ this is congruent to $-x \equiv p^nq^m-x$). Moreover, $\pm J = J$ because $J$ is an indexing set. Therefore, $J\subseteq K \Mod{p^nq^m}$, and for all $i\in I$ we have 
    \begin{align*}
        r_i p^{n-1}q^{m+1} + i &= -uq(\gamma_i p^{n-1}q^m) - r'_i p^nq^{m+1} + i  \\
        &\equiv (vp-1)(\gamma_i p^{n-1}q^m) + i \equiv v\gamma_i p^nq^m + j_i \equiv j_i \in J \Mod{p^nq^m}
    \end{align*}
    So the other inclusion holds as well.
    \item $K\Mod{p^{n-1}q^{m+1}}=I$: once more we simplify 
    $$K \Mod{p^{n-1}q^{m+1}} = \pm \{ i + kp^nq^m \mid i \in I, k \in \Z\} \cup (\pm\{j + kp^nq^m \mid j \in J, k \in \Z\}).$$
    Choosing $k=0$ we see that $I\subseteq K \Mod{p^{n-1}q^{m+1}}$. For the other inclusion, since we are in case (I) or (III), by Proposition \ref{Prop:IndexingSets}, $I$ is $p^{n-1}q^m$-translation invariant because $I$ admits $(n-1,m)\to (n-1,m+1)$, so in particular it is $p^nq^m$-translation invariant. Therefore, $i+kp^nq^m \in I \Mod{p^{n-1}q^{m+1}}\ \forall k\in \Z, i\in I$ and 
    $$j + kp^nq^m = i_j + \delta_j p^{n-1}q^m + kp^nq^m = i_j + p^{n-1}q^m (\delta_j + kp) \in I.$$
    Since $-I\subseteq I$, we conclude that $K\Mod{p^{n-1}q^{m+1}}\subseteq I$.
    \item Finally, $(n,m)\to (n,m+1)$ is in $K$: by Proposition \ref{Prop:IndexingSets}, it suffices to check that $K$ is $p^nq^m$-translation invariant, but this holds by construction.
\end{itemize}
This suffices to show that $K$ realizes $\mathcal{T}$: indeed, both $\mathcal{T}$ and the transfer system that $K$ realizes admits $(n,m)\to (n,m+1)$. But then, by saturation, the admissibility of $(n-1,m)\to (n,m)$ suffice for both of them to determine whether they are in situation (I) or (III), and the two transfer systems coincide there, since $J$ realizes the restriction of $\mathcal{T}$ and the reduction of $K$ equals $J$.

\subsubsection*{\normalfont \framebox{Case (II) (or (III))}} Assume now that $\mathcal{T}$ admits $(n-1,m+1)\to(n,m+1)$. The proof is symmetric to the previous case, this time consider (in $C_{p^nq^{m+1}}$):
\[
K := \pm \{ i + kp^{n-1}q^{m+1} \mid i \in I, k \in \Z\} \cup (\pm\{s_j p^nq^m + j + kp^{n-1}q^{m+1} \mid j \in J, k \in \Z\})
\]
Again, since $0<s_0<q$ by the choices made in Notation \ref{Notation}, $K$ contains a non-zero multiple of $p^nq^m$. Moreover, $K$ contains $p^{n-1}q^{m+1}$. As above, one easily checks that $0\in K$ and $-K\subseteq K$, and $K \Mod{p^{n-1}q^{m+1}} = I$, $K\Mod{p^n q^m}=J$ (in cases (II) and (III), $J$ is $p^{n-1}q^m$-translation invariant (modulo $p^{n}q^{m}$), so in particular it is $p^{n-1}q^{m+1}$-translation invariant). Finally, $K$ admits $(n-1,m+1)\to (n,m+1)$ because it is $p^{n-1}q^{m+1}$-translation invariant by construction. As above, this suffices to prove that $K$ realizes $\mathcal{T}$ because whether we are in case (II) or (III) only depends on the restriction of $\mathcal{T}$ to $[n-1]\times [m+1]$, corresponding to $I$, but $K$ extends $I$.

\subsubsection*{\normalfont \framebox{Case (IV)}} We distinguish subcases:\\

\textit{Case (a).} In this situation, $I$ and $J$ are both $p^{n-1}q^m$-translation invariant. In particular, they are the set of iterated translations by $p^{n-1}q^m$ of a common indexing set $L$ in $C_{p^{n-1}q^m}$, modulo $p^{n-1}q^{m+1}$ and $p^nq^m$ respectively. 
Assume $q>p$ to begin with. Consider
    \[
    K := \pm \{ \ell + kp^{n-1}q^m \mid \ell\in L, 0\leq k < 2q\} \subseteq C_{p^{n}q^{m+1}}.
    \]
    In particular, for $\ell = 0$ and $k=q$, $K$ contains $p^{n-1}q^{m+1}$, respectively $p^nq^m$ for $\ell=0$ and $k=p<2q$, which takes care of $(\star)$. By construction $K$ is an indexing set, and $K \Mod{p^{n-1}q^m} =\pm L = L$. Therefore, it suffices to show that $K$ is $p^{n-1}q^m$-translation invariant modulo ${p^{n-1}q^{m+1}}$, respectively $p^nq^m$, to show that it coincides with $I$, respectively $J$ there. And indeed, for all $\ell\in L$ and $0\leq k < 2q$, we have $\ell + kp^{n-1}q^m + p^{n-1}q^m = \ell + (k+1)p^{n-1}q^m$, which is in $K$ by definition if $k<2q-1$. 
    
    For $k=2q-1$, we find $\ell+2p^{n-1}q^{m+1} \equiv \ell \Mod{p^{n-1}q^{m+1}}$, and writing the Euclidean division $2q = s p + r$, with $1\leq r <p$ and $s\geq 1$ since $p<q$, we also have $\ell + 2qp^{n-1}q^{m} = \ell+ s p^nq^{m} + rp^{n-1}q^m \equiv \ell + rp^{n-1}q^m \Mod{p^nq^m}$ which lies in $K$ since $r<p<q$. 
    
    The argument for additive inverses is the same as the previous cases. It remains to check that $K$ is not $p^{n-1}q^{m+1}$-translation invariant modulo $p^nq^{m+1}$. Then, since we are in case $(a)$, by saturation, it means that it is not $p^nq^m$-translation invariant either. But $2 p^{n-1}q^{m+1} < p^nq^{m+1}$ (since $p>2$) is not contained in $K$: indeed, we have $0\leq \ell + kp^{n-1}q^m \leq p^{n-1}q^m - 1 + (2q-1)p^{n-1}q^m = 2p^{n-1}q^{m+1} - 1$ for all $\ell \in L$ and $0\leq k <2q$, and 
    \begin{align*}
        p^nq^{m+1} - (\ell+kp^{n-1}q^m) &> p^nq^{m+1} - 2p^{n-1}q^{m+1} = (p-2)(p^{n-1}q^{m+1}) \\
        &\geq 2p^{n-1}q^{m+1} \text{ since $p\geq 5$}
    \end{align*}
    Hence $K$ realizes $\mathcal{T}$. If $p>q$, the same proof applies by replacing $2q$ by $2p$ in the definition of $K$.\\
    
\textit{Case (b).} Since transfer systems are closed under restriction, any indexing set extending $I$ and $J$ suffices. Consider
$$K := \pm \{r_i p^{n-1}q^{m+1} + i \mid i\in I\} \cup (\pm \{s_j p^nq^m + j \mid j\in J\}) \subseteq C_{p^nq^{m+1}}.$$
Again, since $0<r_0<p$ and $0<s_0<q$, $K$ contains non-zero multiples of $p^{n-1}q^{m+1}$ and $p^nq^m$. Then, as before we have
\[
K \Mod{p^nq^m} = \pm \{r_i p^{n-1}q^{m+1} + i \mid i\in I\} \cup (\pm J)
\]
and $r_i p^{n-1}q^{m+1} + i \equiv j_i \in J \Mod{p^nq^ m}$, so $K \Mod{p^nq^m} = J$, and similarly $K \Mod{p^{n-1}q^{m+1}} = I$. Thus, $K$ realizes $\mathcal{T}$.\\

\textit{Case (c).}  We note first that any indexing set extending both $I$ and $J$ will not admit $(n-1,m+1)\to(n,m+1)$ by closure under restriction. It therefore suffices to find an indexing set $K \subseteq C_{p^nq^{m+1}}$, extending $I$ and $J$, not admitting $(n,m)\to (n,m+1)$. Consider $\tilde{K} := \pm \{ r_i p^{n-1}q^{m+1} + i \mid i \in I\} \cup (\pm J)$
and set $K := (\tilde{K} \setminus (\pm \{p^nq^m\})) \cup (\pm \{p^nq^m + \alpha up^{n-1}q^{m+1}, 2p^nq^m\})$
 where $\alpha p^{n-1}q^m \in J$ with $0<\alpha<p$ (exists by assumption).

Now, $\tilde{K}$ contains a non-zero multiple of $p^{n-1}q^{m+1}$, namely $r_0p^{n-1}q^{m+1}$, and it is contained in $K$ too (since it cannot be equal to $\pm p^nq^m$, otherwise $p^nq^m$ divides $r_0p^{n-1}q^{m+1}$, so $p \mid r_0q$, but this is impossible because $0< r_0 < p$), and $K$ contains $2p^nq^m \not\equiv 0$ (since $q>2$). Then $K\Mod{p^nq^m}$ clearly contains $J$, and $\alpha up^{n-1}q^{m+1}= \alpha (1-vp)p^{n-1}q^m \equiv \alpha p^{n-1}q^m \Mod{p^nq^m},$ which is contained in $J$ by hypothesis. Also, as before $r_i p^{n-1}q^{m+1} + i \equiv j_i \Mod{p^nq^m}$, so $K\Mod{p^nq^m} =J$. 

It is clear that $K\Mod{p^{n-1}q^{m+1}}$ contains $I$, and it is contained in $I$, indeed $p^nq^m + \alpha up^{n-1}q^{m+1}, 2p^nq^m \in I$ modulo $p^{n-1}q^{m+1}$ because $I$ is $p^{n-1}q^m$-translation invariant (case (c)); and for the same reason, for all $j\in J$, $j=i_j + \delta_j p^{n-1}q^m$ is in $I \Mod{p^{n-1}q^{m+1}}$. So $K \Mod{p^{n-1}q^{m+1}} = I$.
    
Finally, $K$ is not $p^nq^m$-translation invariant. Indeed, $p^nq^m \notin K$: we have $p^nq^{m+1}-2p^nq^m \neq p^nq^m$ since $q>3$. Furthermore, 
\[p^nq^m \not\equiv p^nq^m + \alpha up^{n-1}q^{m+1} \Mod{p^nq^{m+1}}\]
since $p\nmid \alpha u$, and finally $p^nq^m \not\equiv - p^nq^m - \alpha up^{n-1}q^{m+1} \Mod{p^nq^{m+1}}$ else $p^nq^{m+1}$ divides $ 2p^nq^m + \alpha u p^{n-1}q^{m+1}$ and $p$ would divide $\alpha u$. \\

\textit{Case (d).} As in the previous case, it suffices to find an indexing set $K$ extending both $I$ and $J$, that does not admit $(n-1,m+1)\to(n,m+1)$. The proof is done as in the previous case, with $\tilde{K} :=  (\pm I) \cup \pm \{ s_j p^{n}q^{m} + j \mid j \in J\}$
and $$K := \tilde{K} \setminus (\pm \{p^{n-1}q^{m+1}\}) \cup (\pm \{p^{n-1}q^{m+1} + \beta vp^{n}q^{m}, 2p^{n-1}q^{m+1}\})$$
where $\beta p^{n-1}q^m \in I$, with $0<\beta<q$, is our non-zero multiple of $p^{n-1}q^m$ contained in $I$.
\end{proof}

\section{Examples and comparison with MacBrough's approach\texorpdfstring{ in \cite{Macbrough}}{}}\label{Sub:example}
\addtocontents{toc}{\protect\setcounter{tocdepth}{1}}

The inductive proof of Theorem \ref{Prop:SatConjp^nq^m} in Section \ref{Sub:Proof}, being constructive and very explicit, can easily be turned into an algorithm to compute an explicit linear isometries operad associated to a given saturated transfer system on $C_{p^nq^m}$. Such an algorithm requires in principle $(n+1)(m+1)$ iterations of the ``outer loop'', namely $(n+1)(m+1)$ applications of Lemma \ref{Prop:InductionStepProof}, as we extend our indexing set ``one square at a time'' in the grid $[n] \times [m]$. However, extending the indexing set has a complexity which also depends on the primes $p$ and $q$ and the size of the indexing sets we are extending, so this is not an estimate of the complexity of the algorithm. We provide in this section a very basic example of explicit computations using our proof.

\subsection{Our proof applied to an explicit example}\label{Sub:explicitexample}

Consider the case of the group $C_{pq} = C_{35}$ with $p=5$, $q=7$. We consider the transfer system $\mathcal{T}$ on $[1]\times [1]$ with only non-trivial relation $(0,0)\to (1,0)$ (i.e.\ as case (IV)$(d)$ in Subsection \ref{Sub:cases}). 

In view of Lemma \ref{Prop:InductionStepProof} (with $n=1$ and $m=0$), we first want to find an indexing set $J \subseteq C_{p^1q^0} = C_5$ realizing the restriction of our transfer system to $[1]\times[0]$, namely the poset of subgroups of $C_p$ itself, and satisfying $(\star)$. An application of Lemma \ref{Prop:BaseCaseProof} (or direct observation) yields $J = C_5$ (this is actually the unique choice, by Proposition \ref{Prop:IndexingSets}$(ii)$). Also, the proof of Lemma \ref{Prop:BaseCaseProof} provides 
$$I=\{0,1,q-1\} = \{0,1,6\} \subseteq C_{p^0q^1}.$$

In Notation \ref{Notation}, we have to choose $u,v\in \Z$ with $uq+vp=1$. We take $v = 3$ and $u=-2$. For all $j\in J$, we also have to choose $i_j\in I, \delta_j\in\Z$ with $j=i_j+\delta_jp^{n-1}q^m$, and then by Euclidean division we write $-v\delta_j = s'_j q + s_j$ with $0\leq s_j < q$, for all $j\in J$. Here, we pick $\delta_j = j-1$ and $i_j=1$ for all $j\in J$ (since $p^{n-1}q^m = 1$) and thus
$$s_0 = 3, \qquad s_1=0,\qquad s_2=4, \qquad s_3=1,\qquad  s_4 = 5.$$

The proof of Lemma \ref{Prop:InductionStepProof} (case (IV)$(d)$) shows that, for the choice $\beta=1$, our transfer system $\mathcal{T}$ is realized by the indexing set:
$$K = (\tilde{K} \setminus \{q,pq-q\}) \ \cup \ \{q+vp,pq-q-vp,2q,pq-2q\} \subseteq C_{pq} = C_{35}$$
where $$ \tilde{K} = \{0, 1,q-1,pq-1,pq-q+1\} \cup \pm \{s_jp+j \mid j\in J\}.$$
More explicitly, in our case
$$\{q+vp,pq-q-vp,2q,pq-2q\} = \{22,13,14,21\}$$
and 
$$ \tilde{K} \setminus \{q,pq-q\} =(\{0,1,6,34,29\} \cup \{15,1,22,8,29,20,34,13,27,6\}) \ \setminus \ \{7,28\}.$$
In the end, we obtain the indexing set
$$K= \{0,1,6,8,13,14,15,20,21,22,27,29,34\} \subseteq C_{35}.$$
We obtain an explicit linear isometries operad using Definition \ref{Def:indexing}: the operad $\mathcal{L}(\mathcal{U}_K)$ for $\mathcal{U}_K = \bigoplus_{n\in \N}\bigoplus_{k\in K} \lambda_{35}(k)$.

\subsection{Comparison with MacBrough's approach}

The difficulties mentioned in the introduction preventing us from extending our approach to cyclic groups whose order has more than two prime divisors are elegantly circumvented by MacBrough. His approach is fundamentally different from ours in two ways. 

On the one hand, the main feature of his proof is that it does not depend on the description of a particular transfer system one would like to realize by a linear isometries operad. Rather, MacBrough defines abstract devices associated to a group (cyclic or not), called \emph{tight pairs}, whose existence witnesses that the group in question satisfies the saturation conjecture. Then, given a particular saturated transfer system, there is an algorithm to produce a linear isometries operad realizing it from this device. The algorithm can be written abstractly without using any particular features of the specific group or saturated transfer system under consideration. 

On the other hand, tight pairs are well-behaved under products of groups of coprime order; this is what allows MacBrough to prove the existence of tight pairs for all cyclic groups of order coprime to 6 by reducing to the case of cyclic $p$-groups $C_{p^n}$ (\cite[Lemmata 3.3 and 3.4]{Macbrough}). More precisely, there is a tensor product operation for tight pairs, such that the tensor product of a tight pair for a group $G$ and one for a group $G'$ with $(|G|,|G'|) = 1$ is a tight pair for $G\times G'$. This is a very efficient way around a big difficulty of dealing with (saturated) transfer systems on products of groups, namely that a priori they do not admit any general description in terms of (saturated) transfer systems on the factors. Since tight pairs do not have anything to do with a particular transfer system, but rather depend only on the group itself, they bypass this difficulty, as explained at the beginning of \cite[\S 3.2]{Macbrough}.\\

Therefore, in the case of a cyclic group $G$, MacBrough's proof can also in principle be made into an iterative algorithm, requiring, as a very rough estimate, at most $\sum_{H\leq G} |H|$ iterations of the ``outer loop''. Indeed, the proof is building compatible universes for each subgroup of $G$ (represented by the objects called \emph{diagrams} below), and adds at least one isotypic component to one of the universes at each step. There are thus at most $\sum_{H\leq G} |H|$ possible iterations before we reach the maximal universes. In the case $|G|=p^nq^m$, ours required in principle $(n+1)(m+1)$ applications of Lemma \ref{Prop:InductionStepProof}, as mentioned earlier. However, neither of these statements constitutes an estimate of the complexity of the corresponding algorithm, because each application of Lemma \ref{Prop:InductionStepProof}, respectively the operation of determining which components to add to the universe, also has a variable complexity.\\

For comparison, we repeat the example of Subsection \ref{Sub:explicitexample}, following MacBrough's proof instead. Recall that we considered the case $p=5$, $q=7$, $n=m=1$, and the saturated transfer system with only non-trivial relation $(0,0)\to (1,0)$. In the following, we always view $C_p \times C_q$ as $C_{pq}$ via the group isomorphism sending $[1]\in C_{pq}$ to $([1],[1]) \in C_p\times C_q$.

By \cite[Lemma 3.3]{Macbrough}, we have to construct tight pairs for the groups $C_p$ and $C_q$ and tensor them to obtain a tight pair for $C_{pq}$. 

\begin{Def}[\protect{\cite[Def.\ 3.1]{Macbrough}}] Let $G$ be a finite abelian group. A \emph{tight pair} for $G$ is a pair ($D$,$J$) where:
\begin{itemize}[itemsep = 0ex, leftmargin = *]
    \item $D = (D(H))_{H\leq G}$ is a collection of subsets $D(H) \subseteq \widehat{H}$ indexed by subgroups of $G$, where $\widehat{H}$ is the collection of isomorphism classes of (finite dimensional) complex irreducible representations of $H$.
    \item $J = (J^H_K)_{K\leq H\leq G}$ is a \emph{subinductor}, i.e.\ a collection of maps $J^H_K : \mathcal{P}(\widehat{K}) \to \mathcal{P}(\widehat{H})$ from the subsets of $\widehat{K}$ to those of $\widehat{H}$, indexed by intervals $K\leq H$ in the poset of subgroups of $G$, satisfying certain axioms (see \cite[Def.\ 2.2]{Macbrough}).
    \item $D$ and $J$ satisfy certain axioms, which we don't need to know explicitly to apply MacBrough's proof to our example. 
\end{itemize}
\end{Def}

Tight pairs for cyclic $p$-groups are explicitly constructed in \cite[Lemma 3.4]{Macbrough}. In our case, the proof gives the tight pair $(D,J)$ for $C_p$ defined as follows. The subinductor $J$ is defined by $J_{\ast}^{C_p}(\{\CC_\text{triv}\}) = \{\CC_{\text{triv}}\}$, $J_{\ast}^{C_p}(\emptyset) = \emptyset$, and the other maps in the subinductor are the identities. The diagram $D$ is given by $D(\ast) = \{\CC_\text{triv}\}$ and $D(C_p) = \{\CC_\text{triv}, \tau, \overline{\tau}\}$, where $\tau$ is an arbitrary element in $\widehat{C_p} \setminus\{\CC_\text{triv}\}$ which embeds into $\text{Ind}^{C_p}_\ast(\CC_\text{triv}) = \bigoplus_{V\in \widehat{C_p}} V$, where $\text{Ind}$ denotes the induction of representations. We may then choose $\tau = \lambda_p(j)$ for any $0<j< p$, and get $D(C_p) = \{\CC_\text{triv}, \lambda_p(j), \lambda_p(p-j)\}$.  For $C_q$, a tight pair $(D',J')$ is defined in the same way, replacing every instance of $p$ by $q$ in the above, and choosing $0 < k<q$ instead of $0<j<p$.

Then, by \cite[Lemma 3.3]{Macbrough}, $(D'', J'') := (D\otimes D',J\otimes J')$ is a tight pair for $C_{pq}$, where:
    \begin{align*}
        D''(\ast \times \ast) &= \{V \otimes W \mid V\in D(\ast), W\in D'(\ast)\} = \{\CC_\text{triv}\} \\
        D''(C_p \times \ast) &= \{V \otimes W \mid V\in D(C_p), W\in D'(\ast)\} = \{\CC_\text{triv}, \lambda_{p}(j), \lambda_{p}(p-j)\}\\
        D''(\ast \times C_q) &= \{V \otimes W \mid V\in D(\ast), W\in D'(C_q)\} = \{\CC_\text{triv}, \lambda_{q}(k), \lambda_{q}(q-k)\}\\
        D''(C_p \times C_q) &= \{V \otimes W \mid V\in D(C_p), W\in D'(C_q)\} \\
        &= \{\CC_\text{triv}, \lambda_{pq}(jq), \lambda_{pq}(pk), \lambda_{pq}((p-j)q), \lambda_{pq}(p(q-k)),\lambda_{pq}(pk+jq),\\
        &\qquad\lambda_{pq}(pq - pk-jq), \lambda_{pq}(pk+(p-j)q), \lambda_{pq}(p(q-k) + jq)\},
    \end{align*}
and 
\begin{align*}
{J''}_\ast^{C_p} &= {J}_\ast^{C_p}\\
{J''}_\ast^{C_q} &= {J'}_\ast^{C_q}\\
{J''}_\ast^{C_{pq}} &= {J''}_{C_p}^{C_{pq}} \circ {J''}_\ast^{C_{p}},
\end{align*}
where ${J''}_{C_p}^{C_{pq}}$ is the unique union-preserving extension of ${J''}_{C_p}^{C_{pq}}(\{\lambda_p(i)\}) = \{\lambda_{pq}(iq)\}$. Finally, ${J''}_{C_q}^{C_{pq}}$ is defined similarly. The other maps in the subinductor are the identities.\\

Applying the proof of \cite[Thm 2.12]{Macbrough}, we have to compute successive extension of the diagram $D$ until the algorithm converges, i.e.\ until these extensions do not add any isotypic component. We first have to compute the diagram $D''_1$ defined by 
$$\forall H\leq G, \ D''_1(H) = \bigcup_{(K\to H)\in\mathcal{T}} \text{Ind}^H_K(D''(K)),$$ 
where $\mathcal{T}$ is our saturated transfer system, and for any $E\subseteq \widehat{K}$, we define
$$\text{Ind}^H_K(E) = \{V \in \widehat{H} \mid \exists W\in E, V \subseteq \text{Ind}^H_K(W)\}.$$
Then $D''_1(H) = D''(H)$ for all subgroups $H$ except for 
$$D''_1(C_p \times \ast) = D''(C_p\times \ast) \cup \text{Ind}_{\ast \times \ast}^{C_p\times \ast}(D''(\ast\times\ast))=\{\lambda_p(i) \mid 0\leq i\leq p-1\} = \widehat{C_p}.$$ 

The next iteration consists in computing the diagram $D''_2$ defined by 
$$\forall H\leq G,\ D''_2(H) = \bigcup_{K\leq H} {J''}^H_K(D''_1(K)).$$
Again, $D''_2(H) = D''_1(H)$ for all subgroups $H$ except for 
\begin{align*}
    D''_2(C_p \times C_q) &=  J^{C_{pq}}_{C_q}(D''_1(\ast \times C_q)) \cup J^{C_{pq}}_{C_p}(D''_1(C_p \times \ast))  \cup D''_1(C_p\times C_q)\\
    &= \{\lambda_{pq}(iq)\mid 0\leq i\leq p-1\} \cup D''_1(C_p\times C_q).
\end{align*}

We then see that the next extensions do not change anything to the diagram, so the algorithm stops. We obtain the universe whose set of isotypic components is $D''_2(C_{pq}) = \{0,2,5,7,12,14,21,23,28,30,33\}$ in the case $k=1$, $j=1$ for example. Note that the result may never be the exact same indexing set as we obtained in Subsection \ref{Sub:explicitexample} for any choice of $j$ and $k$, since there will always appear $\lambda_{pq}(iq) \in D''_2(C_p\times C_q)$ for $0\leq i<q$, whereas the indexing set we found before does not contain $\lambda_{pq}(q) = \lambda_{35}(7)$. However, we also had to make choices in our construction.\\

The approach of MacBrough can also produce the same indexing set as ours. Here is an example. Consider the case of the group $C_{p^2}$ with $p=5$, and the saturated transfer system with only non-trivial relation $1\to 2$. Then our construction yields the indexing set $\{ \pm (k + ip) \mid k \in \{0,1,p-1\}, 0\leq i < p \}$, whereas MacBrough's yields, for the choice of tight pair $(D,J)$ with
\begin{align*}
    J^{C_{p^2}}_{C_p}(\{\lambda_p(i)\}) &= \{\lambda_{p^2}(i), \lambda_{p^2}(p^2-p+i)\}\\
    D(\ast) &= \{\CC_\text{triv}\}\\
    D(C_p) &= \{\CC_\text{triv}, \lambda_p(1), \lambda_p(p-1)\}\\
    D(C_{p^2}) &= \{\CC_\text{triv}, \lambda_{p^2}(2p), \lambda_{p^2}(p^2-2p)\}
\end{align*}
(and the remainder of the subinductor is induced by requiring preservation of unions) the indexing set $\{0,1,4,5,6,9,10,11,14,15,16,19,20,21,24\}$, which agrees with ours.

\bibliographystyle{alphaurl}
\bibliography{biblio.bib}

\end{document}